\documentclass{article}

\usepackage[utf8]{inputenc}
\usepackage{amsmath,amsthm,amssymb,tikz-cd,bbm}
\usepackage{hyperref}
\usepackage{bussproofs}
\usepackage{authblk}

\theoremstyle{definition}
\newtheorem{definition}{Definition}
\newtheorem{remark}{Remark}

\theoremstyle{plain}
\newtheorem{theorem}{Theorem}
\newtheorem{corollary}{Corollary}
\newtheorem{lemma}{Lemma}
\newtheorem{proposition}{Proposition}

\newcommand{\mf}{{\bf MF}}
\newcommand{\hott}{{\bf HoTT}}
\newcommand{\mtt}{\mbox{{\bf mTT}}}
\newcommand{\mttimp}{$\mathbf{mTT}_\mathsf{imp}$}
\newcommand{\czf}{\mbox{{\bf CZF}}}
\newcommand{\emtt}{\mbox{{\bf emTT}}}
\newcommand{\emttimp}{$\mathbf{emTT}_\mathsf{imp}$}
\newcommand{\emttc}{$\mathbf{emTT}^c$}
\newcommand{\mltt}{\mbox{{\bf MLTT}}}
\newcommand{\coc}{\mbox{{\bf CC}}}
\newcommand{\cocp}{$\mathbf{CC}_\mathsf{ML}$}
\newcommand{\cocpc}{$\mathbf{CC}^c_\mathsf{ML}$}

\newcommand{\ext}{\mathsf{Dc}}

\newcommand{\eqclass}[1]{\,\raisebox{2.5pt}{$\ulcorner$} #1 \raisebox{2.5pt}{$\urcorner$}}
\newcommand{\sem}[1]{[\![\, #1 \,]\!]}
\newcommand{\decorate}[1]{#1}

\title{Equiconsistency of the Minimalist Foundation \\ with its classical version}

\author{Maria Emilia Maietti\footnote{maietti@math.unipd.it} \quad Pietro Sabelli\footnote{sabelli@math.unipd.it}}

\date{Dipartimento di Matematica ``Tullio Levi-Civita''\\ University of Padua, Italy \\[2ex] \today}

\begin{document}

\maketitle

\begin{abstract}
The Minimalist Foundation, for short \mf, was conceived by the first author with G. Sambin in 2005, and fully formalized in 2009, as a common core among the most relevant constructive and classical foundations for mathematics.

To better accomplish its minimality, \mf\ was designed as a two-level type theory, with 
an intensional level \mtt, an extensional one \emtt, and an interpretation
of the latter into the first.

Here, we first show that the two levels of \mf\ are indeed equiconsistent by interpreting \mtt\ into \emtt.  Then, we show that the classical extension \emttc\ is equiconsistent with
\emtt\  by suitably extending the Gödel-Gentzen double-negation translation of classical logic in the intuitionistic one. As a consequence, \mf\ turns out to be compatible with classical predicative mathematics à la Weyl, 
contrary to the most relevant foundations for constructive mathematics.

Finally, we show that the  chain of equiconsistency results for \mf\ can be straightforwardly extended to its impredicative
version to deduce that Coquand-Huet's Calculus of Constructions equipped with basic inductive types is equiconsistent with its extensional and classical versions too.



\end{abstract}

\section{Introduction}
This paper is a contribution to the field of foundations of mathematics, and in particular to the meta-mathematical properties of the Minimalist Foundation and of the Calculus of Constructions with basic inductive types.

It is well known that for constructive mathematics there is a wide variety of foundations, formulated not only in axiomatic set theory, such as Aczel's Constructive Zermelo-Fraenkel set theory $\mathbf{CZF}$ \cite{czf}; but also in type theory, such as Coquand-Paulin's Calculus of Inductive Constructions $\mathbf{CIC}$ \cite{CIC}, Martin-Löf's type theory $\mathbf{MLTT}$ \cite{MLTT}, and the more recent Homotopy Type Theory \hott\ \cite{hottbook}; and in category theory, such as the internal language of categorical universes like topoi \cite{lambek-scott, modular}.

The Minimalist Foundation, for short \mf, was first conceived in \cite{mtt}, and then fully formalized in \cite{m09}, to serve as a common core compatible with the most relevant constructive and classical foundations, including the mentioned ones. To meet its \textit{raison d'être}, \mf\ has been designed as a two-level foundation, consisting of an extensional level \emtt, understood as the actual theory in which constructive mathematics is formalized and developed, and an intensional level \mtt, which acts as a functional programming language enjoying a realizability interpretation à la Kleene \cite{MMM, IMMS}. 

Both levels of \mf\ are formulated as dependent type theories à la Martin-Löf enriched with a primitive notion of proposition and related form of comprehension. A remarkable difference is that  \emtt\ is
equipped with the extensional constructions of \cite{ML84} enriched with quotients and power collections of sets, while \mtt\ is equipped with the intensional types of \cite{MLTT} enriched with the collection of predicates on a set.
An interpretation of \emtt\ in \mtt\ was given in \cite{m09} using a quotient model of setoids, whose peculiar properties concerning analogous models on Martin-Löf's type theory had been analysed categorically in \cite{qu12,elqu, unifying} in terms of completions of Lawvere doctrines.

Our aim here is to show that \mf\ is equiconsistent with
its classical counterpart.
This is not true for \czf, \mltt, or \hott\ since they become impredicative when the Law of Excluded Middle is added to their intended underlying logic.

To meet our purpose, we first show that \mtt\ and \emtt\ are equiconsistent, namely that we can invert the structure of \mf\ by interpreting 
\mtt\ within \emtt. This goal is achieved using in particular the technique of canonical isomorphisms, already employed to interpret \emtt\ within \mtt\  in \cite{m09} and \emtt\ within \hott\ in \cite{MFhott}. Then, to simplify our goal we refer to the {\it classical counterpart} of \mf\ by just considering the extension \emttc\ of the extensional level \emtt\ with the Law of Excluded Middle. The equiconsistency of the two levels of \mf\ justifies this choice. 

The next step of our work is to show that \emttc\ is equiconsistent with \emtt. To do so, we adapt the Gödel-Gentzen double-negation translation  of classical logic in the intuitionistic one (see for example \cite{Troelstra}) to interpret \emttc\ within \emtt, exploiting in particular the fact that the type constructors of \emtt\ preserve the $\neg\neg$-stability of their propositional equalities. The proof indeed requires more care than for systems of many-sorted logic, such as Heyting arithmetic with finite types in \cite{Troelstra}, because of the interaction in \emtt\ between propositions and collections.

As a consequence of the equiconsistency of \emtt\ with \emttc, we show that real numbers à la Dedekind do not form a set neither in \emtt, nor in \emttc. 
Therefore, \emttc\ can be taken as a foundation of 
classical predicative mathematics in the spirit of Weyl in \cite{DasKontinuum}, and of course \mf\ through \emtt\ becomes compatible with it. Another benefit of our equiconsistency results is that, to establish the exact proof-theoretic strength of \mf, which is still an open problem, we are no longer bound to refer to \mtt\ but we can interchangeably use also \emtt\ or \emttc.

Finally, by exploiting the fact that the intensional level \mtt\ is a {\it predicative version} of Coquand-Huet's Calculus of Constructions \cite{CoC}, we show that
the chain of equiconsistency results for \mf\ can be straightforwardly adapted to an impredicative version of \mf\ whose intensional level is the Calculus of Constructions equipped with inductive types from the first-order fragment of \mltt, which we call \cocp, thus extending the result in \cite{CoCprooftheory} on the equiconsistency of the logical base of the calculus with its classical version without relying on normalization properties of \cocp.

A related relevant goal would be to investigate how to extend the equiconsistency results presented here to extensions of the Minimalist Foundation, and its impredicative version based on \cocp, with the inductive and coinductive definitions investigated in \cite{mmr21,mmr22,MFwtypes}.
While the equiconsistency proof of \mtt\ with \emtt\ can be transferred smoothly to these extensions, this does not apply to the Gödel-Gentzen double-negation translation and this goal is left to future work.

\section{Brief recap of the Minimalist Foundation}

In this section, we recall the fundamental facts about the Minimalist Foundation, together with some useful conventions to work with it.

The name \textit{Minimalist Foundation}, abbreviated as \mf, refers to the two-level system introduced in \cite{m09} following the requirements in \cite{mtt}, consisting of an {\it extensional level}  $\mathbf{emTT}$ for {\it extensional minimal Type Theory}, an {\it intensional level} $\mathbf{mTT}$ for {\it minimal Type Theory}, and an interpretation of the first in the latter.
Both \emtt\ and \mtt\ are formulated as dependent type theories  with four kinds of types: small propositions, propositions, sets, and collections (denoted respectively $prop_s$, $prop$, $set$ and $col$). Sets are particular collections, just as small propositions are particular propositions. Moreover, we identify a proposition (respectively, a small proposition) with the collection (respectively, the set) of its proofs. Eventually, we have the following square of inclusions between kinds.
\[
\begin{tikzcd}[row sep=small, column sep = small]
prop_s \arrow[d, hook] \arrow[r, hook] & set \arrow[d, hook] \\
prop \arrow[r, hook]                   & col                
\end{tikzcd}
\]
This fourfold distinction allows one to differentiate, on the one hand, between logical and mathematical entities. On the other, between inductively generated domains and open-ended domains (corresponding to the usual distinction between sets and classes in set theory – or the one between small and large types in Martin-Löf's type theory with a universe), thus guaranteeing the predicativity of the theory.

In both levels, propositions are those of predicate logic with equality; a proposition is small if all its quantifiers and propositional equalities are over sets. The base sets include the empty set $\mathsf{N_0}$, the singleton set $\mathsf{N_1}$, the set constructors are the dependent sum $\Sigma$, the dependent product $\Pi$, the disjoint sum $+$, the list constructor $\mathsf{List}$. What differentiates the set constructors of the two levels is the presence, only at the extensional level \emtt, of a constructor $A/R$ to quotient a set $A$ by a small equivalence relation $R$ depending on the product $A\times A$. Regarding collections, while \mtt\ is equipped with a universe of small propositions $\mathsf{Prop_s}$ and function spaces $A\rightarrow \mathsf{Prop_s}$ from a set $A$ towards $\mathsf{Prop_s}$,
collections  of \emtt\  include a classifier $\mathcal{P}(1)$ of small propositions \textit{up to equiprovability}, which is often called the \textit{power collection of the singleton}, and power collections $\mathcal{P}(A)$ for each set $A$, defined as the function space $\mathcal{P}(A) :\equiv A\rightarrow \mathcal{P}(1)$.
It is important to notice that, contrary to its definition à la Russell in \cite{m09}, the universe $\mathsf{Prop_s}$ of small propositions of $\mathbf{mTT}$ is presented here à la Tarsky through the following rules.
\[
 \textsf{F-$\mathsf{Prop_s}$} \;
\frac
{}
{\mathsf{Prop_s} \; col}
\qquad
 \textsf{I-$\mathsf{Prop_s}$} \;
\frac
{
\varphi \; prop_s
}
{
\widehat{\varphi} \in \mathsf{Prop_s}
}
\qquad
 \textsf{E-$\mathsf{Prop_s}$} \;
\frac
{
c \in \mathsf{Prop_s}
}
{
\mathsf{T}(c) \; prop_s
}
\]
\[
 \textsf{C-$\mathsf{Prop_s}$} \;
\frac
{
\varphi \; prop_s
}
{\mathsf{T}(\widehat{\varphi}) = \varphi \; prop_s}
\qquad
 \textsf{$\eta$-$\mathsf{Prop_s}$} \;
\frac
{
c \in \mathsf{Prop_s}
}
{\widehat{\mathsf{T}(c)} = c \in \mathsf{Prop_s}}
\]
\[
\textsf{Eq-$\mathsf{Prop_s}$} \;
\frac
{
\varphi = \psi \; prop_s
}
{
\widehat{\varphi} = \widehat{\psi} \in \mathsf{Prop_s}
}
\qquad
 \textsf{Eq-E-$\mathsf{Prop_s}$} \;
\frac
{
c = d \in \mathsf{Prop_s}
}
{\mathsf{T}(c) = \mathsf{T}(d) \; prop_s}
\]
Finally, in both levels collections are closed under dependent sums $\Sigma$.

The intensionality of \mtt\ means that propositions are proof-relevant, the propositional equality is intensional à la Leibniz, and the only computation rules are $\beta$-equalities. Conversely, the extensionality of \emtt\ means that the propositional equality reflects judgemental equality, all $\eta$-equalities are valid, and propositions are proof-irrelevant; in particular, in \emtt\ there is a canonical proof-term $\mathsf{true}$ for propositions, and sometimes we render the judgement $\mathsf{true} \in \varphi \; [\Gamma]$ as $\varphi \; \mathsf{true} \; [\Gamma]$ to enhance its readability.

The two levels of \mf\ are related by an interpretation of \emtt\ into \mtt\ in \cite{m09} using a quotient model, analyzed categorically in \cite{qu12,elqu}, together with canonical isomorphisms as in \cite{hofmann} but without any use of choice principles in the meta-theory. This interpretation shows that the link between  the two levels of \mf\ fulfils  Sambin's forget-restore principle in \cite{toolbox}, saying that that computational information present only implicitly in the derivations of the extensional level can be restored as terms of the intensional level, from which, in turn, programs can be extracted as shown by the realizability model in \cite{IMMS}. 

The same process described above can be performed for the two-level extension of \mf\ with inductive and coinductive topological definitions in \cite{mmr21,mmr22}, which amount to include all inductive and coinductive predicate definitions as shown in \cite{topcount,coinductivecounterpart}.

Furthermore, the two-level structure of \mf\ can be extended to its impredicative version
described in Section \ref{impmf}, by exploiting the fact that \mtt\ is indeed a {\it predicative version} of
Coquand-Huet's Calculus of Constructions in \cite{CoC,metaCoC}.

\subsection{Notation}

In dependent types theory, where the  definitions of language and derivability are intertwined, the expressions of the calculus have to be introduced first with a so-called \textit{pre-syntax} (see \cite{streicher}); the pre-syntaxes of both levels consist of four kinds of entities: pre-contexts, pre-types, pre-propositions, and pre-terms; we assume that the pre-syntax is fully annotated, in the sense that each (pre-)term has all the information needed to infer the (pre-)type it belongs to, although for readability, we will leave a lot of that implicit in the following.

We use the entailment symbol
$\mathcal{T} \vdash \mathcal{J}$ to express that {\it the theory $\mathcal{T}$ derives the judgement $\mathcal{J}$}. Moreover, when doing calculations or writing inference rules, we will often follow the usual conventions of omitting the piece of context common to all the judgements involved; furthermore, the placeholder $type$ in a judgement of the form $A \; type \; [\Gamma]$ stands for one of the four kinds $prop_s$, $prop$, $set$ or $col$, always with the same choice if it has multiple occurrences in the same sentence or inference rule.


We will make use of the following common shorthands: the propositional equality predicate $\mathsf{Eq}(A,a,b)$ of \emtt\ will be often abbreviated as $a =_A b$; we will often write $f(a)$ as a shorthand for $\mathsf{Ap}(f,a)$; we reserve the arrow symbol $\to$ \textit{(resp. $\times$)} as a shorthand for a non-dependent product \textit{(resp. for non-dependent product sets)}, while we denote the implication connective with the arrow symbol $\Rightarrow$; the projections from a dependent sum are defined as $\pi_{\mathsf{1}}(z) :\equiv \mathsf{El}_{\Sigma}(z,(x,y).x)$ and
$\pi_{\mathsf{2}}(z) :\equiv \mathsf{El}_{\Sigma}(z,(x,y).y)$; negation, the true constant, and logical equivalence are defined respectively as
$\neg \varphi :\equiv \varphi \Rightarrow \bot$,
$\top :\equiv \neg\bot$ and $\varphi \Leftrightarrow \psi :\equiv \varphi \Rightarrow \psi \wedge \psi \Rightarrow \varphi$.

In $\mathbf{emTT}$, we define the decoding of a term $U \in \mathcal{P}(1)$ as the small proposition $\ext(U) :\equiv \mathsf{Eq}(\mathcal{P}(1),U,[\top]) \; prop_s$, observing that it satisfies the following computation rule.
\[
 \textsf{C-$\mathcal{P}(1)$} \;
\frac
{\varphi \; prop_s}
{\ext([\varphi]) \Leftrightarrow \varphi \; \mathsf{true}}
\]
As usual in \emtt, we let $\mathcal{P}(A) :\equiv A \to \mathcal{P}(1)$ denote the power collection of a set $A$, and $a \,\varepsilon\, V :\equiv \mathsf{Ap}(V,a)$ denote the (propositional) relation of membership between terms $a \in A$ and subsets $V \in \mathcal{P}(A)$; accordingly, we will employ the common set-builder notation $\{x \in A \,|\, \varphi(x)\} :\equiv (\lambda x \in A)[\varphi(x)]$ for defining a subset by comprehension through a small predicate.

\section{Equiconsistency of the two levels of \mf}\label{eqlevels}
The presence of an intensional and an extensional level in the Minimalist Foundation resembles very closely the two versions, intensional and extensional, of Martin-Löf's type theory.
Indeed, both \emtt\ and \mtt\ are formulated as dependent type theories extending versions of Martin-Löf's type theory enriched with a primitive notion of proposition; moreover, propositions are thought of as types to guarantee in particular their comprehension. More precisely, \emtt\
extends the first-order version in \cite{ML84} with quotients and  power collections of sets,
while \mtt\ extends the first-order version in \cite{MLTT} with
the collection of predicates on a set.

While it is notoriously difficult to interpret the extensional version of Martin-Löf's type theory in \cite{ML84} into its intensional one in \cite{MLTT}, especially in the presence of universes (see for example \cite{hofmann, palmgren}), in the other direction the task is trivial. Indeed, the extensional version is a direct extension of the intensional one obtained mainly by strengthening the elimination rule of the identity type to make it reflect judgemental equality.

In the case of the Minimalist Foundation, an interpretation of the extensional level into the intensional one was given in \cite{m09}, in which the theory \emtt\ is interpreted in a quotient model of so-called setoids constructed over the theory \mtt. However, contrary to Martin-Löf's type theory, \mtt\ is not an extension of \emtt\ because of the discrepancy between the intensional universe of small propositional in \mtt\ and the power collection of the singleton in \emtt. The question of whether \mtt\ can be interpreted in \emtt\ is therefore not trivial, and it is what we are going to answer positively in this section.

We first observe that, to fix the discrepancy described above it is sufficient to add an axiom $\textsf{propext}$ of \textit{propositional extensionality} to \emtt; in this way an interpretation of \mtt\ into $\mathbf{emTT}+\textsf{propext}$ is easily achieved; then, we can interpret $\mathbf{emTT}+\textsf{propext}$ back into \emtt\ by employing the technique of {\it canonical isomorphisms}, already used in the interpretation of \emtt\ in \mtt\ in \cite{m09}, independently adopted for interpretations in other type-theoretic systems in \cite{hofmann, spadetto}, and later employed also in \cite{MFhott} to show the compatibility of \emtt\ with \hott, given that propositional extensionality is just an equivalent presentation in \emtt\ of propositional univalence. Moreover, such interpretation of $\mathbf{emTT}+\textsf{propext}$ into \emtt\ will be \textit{effective}, in the sense of \cite{m09} and \cite{eliminating}, since its Validity Theorem \ref{validitypropext} can be constructively implemented as a translation of derivations of the source theory into derivations of the target theory. Finally, as a byproduct we will also conclude that $\mathbf{emTT}+\textsf{propext}$ is conservative
over \emtt.

\subsection{Interpreting \mtt\ into \texorpdfstring{$\mathbf{emTT}+\mathsf{propext}$}{emTT+propext}}
Recall that the power collection  $\mathcal{P}(1)$ of the singleton considers propositions up to equiprovability, while, in the intensional case, the universe $\mathsf{Prop_s}$ of small propositions does not; in particular, it is clear that $\mathcal{P}(1)$ cannot interpret $\mathsf{Prop_s}$ since the computation rule of the former on the left is weaker than that of the latter on the right:
\[
\textsf{C-$\mathcal{P}(1)$} \;
\frac
{\varphi \; prop_s}
{\ext([\varphi]) \Leftrightarrow \varphi \; \mathsf{true}}
\qquad\qquad 
 \textsf{C-$\mathsf{Prop_s}$} \;
\frac
{\varphi \; prop_s}
{\mathsf{T}(\widehat{\varphi}) = \varphi \; prop_s}
\]
To rectify this situation, we consider adding to $\mathbf{emTT}$ axioms for propositional extensionality.
\[
\textsf{propext}\;\frac
{\varphi \; prop \quad \psi \; prop \quad \varphi \Leftrightarrow \psi \; \mathsf{true}}
{\varphi = \psi \; prop}
\]
\[
\mathsf{prop_{s}ext}\;\frac
{\varphi \; prop_s \quad \psi \; prop_s \quad \varphi \Leftrightarrow \psi \; \mathsf{true}}
{\varphi = \psi \; prop_s}
\]
Identifying equality and equiprovability for propositions clearly fixes the discrepancy between the two collections $\mathcal{P}(1)$ and $\mathsf{Prop_s}$. The resulting theory will be called $\mathbf{emTT}+\textsf{propext}$. As the next proposition shows, it can easily interpret $\mathbf{mTT}$.

\begin{proposition}\label{eqc1}
$\mathbf{mTT}$ is interpretable in $\mathbf{emTT}+\mathsf{propext}$.
\end{proposition}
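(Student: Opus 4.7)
The plan is to define a translation $\sem{-}$ of pre-syntax from \mtt\ into $\mathbf{emTT}+\mathsf{propext}$ and then show, by induction on derivations, that $\mathbf{mTT}\vdash\mathcal{J}$ implies $\mathbf{emTT}+\mathsf{propext}\vdash\sem{\mathcal{J}}$. The translation is essentially the identity on the constructors shared by the two levels; the interesting cases are $\sem{\mathsf{Prop_s}}:\equiv\mathcal{P}(1)$, $\sem{\widehat{\varphi}}:\equiv[\sem{\varphi}]$, $\sem{\mathsf{T}(c)}:\equiv\ext(\sem{c})$ on the universe side, together with $\sem{\mathsf{Id}(A,a,b)}:\equiv\mathsf{Eq}(\sem{A},\sem{a},\sem{b})$, $\sem{\mathsf{id}(a)}:\equiv\mathsf{true}$, and a translation of the intensional $\mathsf{Id}$-eliminator that extracts the endpoint from the fully annotated pre-syntax and plugs it into the premise family.

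For the inductive step, most of the rules of \mtt\ are simulated directly, since \emtt\ contains the same structural and type-forming constructors with at worst additional judgemental equalities; moreover, proof-relevance in \mtt\ is accommodated by the fact that in \emtt\ every proposition has the canonical term $\mathsf{true}$ and is proof-irrelevant, so every \mtt-term of a proposition translates to $\mathsf{true}$ without ambiguity. The two non-routine verifications I would single out are the following: \emph{(i)} the intensional $\mathsf{Id}$-elimination, which is validated using the reflection rule of \emtt: from $\sem{e}\in\mathsf{Eq}(\sem{A},\sem{a},\sem{b})$ one derives $\sem{a}=\sem{b}$ and $\sem{e}=\mathsf{true}$ judgementally, so any family $d(x)$ inhabiting $\sem{C}(x,x,\mathsf{true})$ instantiated at $\sem{a}$ yields the required element of $\sem{C}(\sem{a},\sem{b},\sem{e})$, with the $\beta$-rule becoming definitional; \emph{(ii)} the universe rules for $\mathsf{Prop_s}$, whose crucial computation rule $\mathsf{T}(\widehat{\varphi})=\varphi$ translates to $\ext([\varphi])=\varphi$, which is precisely the equality that $\textsf{C-}\mathcal{P}(1)$ supplies only up to equiprovability; here $\mathsf{prop_s ext}$ is exactly the ingredient needed to promote it to a judgemental equality, and the $\eta$-rule $\widehat{\mathsf{T}(c)}=c$ becomes $[\ext(c)]=c$, which follows from the extensionality of $\mathcal{P}(1)$ already available in \emtt.

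I expect the main conceptual obstacle to be the $\mathsf{Id}$-elimination step, where the proof-relevant machinery of \mtt\ has to be mimicked by the reflection rule of \emtt; care is required because the eliminator may occur under open contexts with variables of $\mathsf{Id}$-type, but this is handled cleanly by the fully annotated pre-syntax convention adopted in the paper. Once this step is settled, the rest of the argument is a mostly mechanical rule-by-rule check, with the two axioms of $\mathsf{propext}$ and $\mathsf{prop_s ext}$ doing all the remaining work to bridge the discrepancy between $\mathsf{Prop_s}$ and $\mathcal{P}(1)$.
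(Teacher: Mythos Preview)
Your proposal is correct and follows essentially the same approach as the paper: rename $\mathsf{Prop_s}\mapsto\mathcal{P}(1)$, $\widehat{-}\mapsto[-]$, $\mathsf{T}\mapsto\ext$, $\mathsf{Id}\mapsto\mathsf{Eq}$, collapse all proof-term constructors to $\mathsf{true}$ via $\textsf{prop-mono}$, and use $\mathsf{propext}$ to upgrade $\textsf{C-}\mathcal{P}(1)$ to the judgemental equality required by $\textsf{C-}\mathsf{Prop_s}$.

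The one place where you do more work than the paper is the $\mathsf{Id}$-eliminator: since in \mtt\ the propositional equality is \emph{Leibniz} equality, $\mathsf{El}_{\mathsf{Id}}$ targets propositions only and is therefore itself a proof-term constructor, so the paper simply sends it to $\mathsf{true}$ along with the others. Your reflection-based simulation would be needed if \mtt\ carried a Martin-L\"of--style $J$-rule eliminating into arbitrary types, but here it is unnecessary (though not incorrect).
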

\begin{proof}
Consider the translation that renames in the pre-syntax $\mathsf{Id}$ to $\mathsf{Eq}$, $\mathsf{Prop_s}$ to $\mathcal{P}(1)$, $\widehat{-}$ to $[-]$, $\mathsf{T}(-)$ to $\ext(-)$, and all the proof-term constructors $\mathsf{El}_\bot$, $\lambda_\Rightarrow$, $\mathsf{Ap}_\Rightarrow$, $\langle-,_\wedge-\rangle$, $\pi_1^\wedge$, $\pi_2^\wedge$, $\mathsf{inl}_\vee$,$\mathsf{inr}_\vee$, $\mathsf{El}_\vee$, $\lambda_\forall$, $\mathsf{Ap}_\forall$, $\langle-,_\exists-\rangle$, $\mathsf{El}_\exists$, $\mathsf{id}$, $\mathsf{El}_\mathsf{Id}$ to the canonical proof-term $\mathsf{true}$.

It is straightforward to check that the above translation is an interpretation of $\mathbf{mTT}$ into $\mathbf{emTT}+\mathsf{propext}$. In particular, in \emtt\ the rule $\textsf{prop-mono}$ collapses all the proof-terms to the canonical one $\mathsf{true}$, while the computation rule $\textsf{C-Prop\textsubscript{s}}$ of the universe is satisfied thanks to the additional axiom $\textsf{propext}$.
\end{proof}

We now turn to the task of interpreting $\mathbf{emTT}+\textsf{propext}$ into $\mathbf{emTT}$. The key idea is to interpret a proposition of $\mathbf{emTT}+\textsf{propext}$ as a proposition of $\mathbf{emTT}$ \textit{up to equiprovability}, that is as an equivalence class of logically equivalent $\mathbf{emTT}$-propositions. Since collections may depend on propositions, crucially thanks to the $\textsf{prop-into-col}$ rule and the quotient set constructor, we will have to extend this rationale to all types of \emtt\  by interpreting them as types \textit{up to equivalent logical components}.

\subsection{Canonical isomorphisms}
In this subsection, we define a notion of {\it canonical isomorphism} in \emtt\ that will be used to interpret $\mathbf{emTT}+\textsf{propext}$.
As already mentioned, the idea of using canonical isomorphisms to interpret extensional equalities in type theory was originally conceived in \cite{m09} between objects of a quotient model, and, independently by Hofmann in \cite{hofmann}, whilst with the additional help of the Axiom of Choice in the meta-theory. The results in \cite{hofmann} were later made effective in \cite{Oury05,eliminating} with the adoption of a heterogeneous equality. Finally, canonical isomorphisms were used recently also in \cite{MFhott}, and the treatment given here closely resembles it both in definitions and proof techniques. Since in this subsection our only object theory will be $\mathbf{emTT}$, we assume that all the judgements are meant to be judgements derivable in $\mathbf{emTT}$.

Recall that a \textit{functional term} from a collection $A$ to another collection $B$ defined over the same context is a term $t(x) \in B \; [x \in A]$ of type $B$ defined in the context extended by $A$. We say that $t$ is an \textit{isomorphism} if there exists another functional term $t^{-1}(y) \in A \; [y \in B]$ from $B$ to $A$ such that
\[
(\forall x \in A)t^{-1}(t(x)) =_A x \;\wedge\; (\forall y \in B)t(t^{-1}(y)) =_B y
\]
and in that case, we say that the collections $A$ and $B$ are \textit{isomorphic}.

\begin{definition}[Canonical isomorphisms]
We inductively define a family of functional terms, called \textit{canonical isomorphisms}, between collections depending on a context (which, as customary, in each of the following clauses will be left implicit):

\begin{enumerate}
\item if $\varphi$ and $\psi$ are logically equivalent propositions (that is, if $\varphi \Leftrightarrow \psi \; \mathsf{true}$ is derivable), then the unique functional term $\mathsf{true} \in \psi \; [x \in \varphi]$ is a canonical isomorphism;
\item the identities of the base types $\mathsf{N_0}$, $\mathsf{N_1}$, and $\mathcal{P}(1)$ are canonical isomorphisms;
\item if $\tau(x) \in B \; [x \in A]$ is a canonical isomorphism between dependent sets, then the functional term
\[
(a_1,\dots,a_n) \in \mathsf{List}(A) \mapsto (\tau(a_1),\dots,\tau(a_n)) \in \mathsf{List}(B)
\]
extending $\tau(x)$ to lists element-wise is a canonical isomorphism; it can be formally defined as
\[
\mathsf{El_{List}}(l,\epsilon,(x,y,z).\mathsf{cons}(z,\tau(x))) \in \mathsf{List}(B) \; [l \in \mathsf{List}(A)]
\]
\item if $\tau(x) \in A' \; [x \in A]$ and $\sigma(x) \in B' \; [x \in B]$ are two canonical isomorphisms between dependent sets, then their coproduct
\begin{align*}
\mathsf{inl}(a) \in A+B & \mapsto \mathsf{inl}(\tau(a)) \in A'+B' \\
\mathsf{inr}(b) \in A+B & \mapsto \mathsf{inr}(\sigma(b)) \in A'+B'
\end{align*}
is a canonical isomorphism; it can be formally defined as
\[
\mathsf{El_{+}}(z,(x).\tau(x),(y).\sigma(y)) \in A'+B' \; [z \in A+B] 
\]
\item if $B(x) \; col \; [x \in A]$ and $B'(x) \; col \; [x \in A']$ are two dependent collections, and there are canonical isomorphisms
\[
\tau(x) \in A' \; [x \in A]
\qquad
\sigma(x,y) \in B'(\tau(x)) \; [x \in A, y \in B(x)]
\]
then the functional term
\[
\langle a, b \rangle \in (\Sigma x \in A)B(x) \mapsto \langle \tau(a), \sigma(a,b) \rangle \in (\Sigma x \in A')B'(x)
\]
is a canonical isomorphism; it can be formally defined as
\[
\mathsf{El}_\Sigma(z,(x,y).\langle \tau(x), \sigma(x,y) \rangle) \in (\Sigma x \in A')B'(x) \; [z \in (\Sigma x \in A)B(x) ]
\]
\item if $B(x) \; col \; [x \in A]$ and $B'(x) \; col \; [x \in A']$ are two dependent collections such that their dependent product is a collection, and there are canonical isomorphisms
\[
\tau(x) \in A \; [x \in A']
\qquad
\sigma(x,y) \in B'(x) \; [x \in A', y \in B(\tau(x))]
\]
then the following is a canonical isomorphism
\[
(\lambda x \in A')\sigma(x,\mathsf{Ap}(f,\tau(x))) \in (\Pi x \in A')B'(x) \; [f \in (\Pi x \in A)B(x)]
\]
\item if $\tau(x) \in B \; [x \in A]$ is a canonical isomorphism between sets, $R(x,y)$ is a small equivalence relation on $A$, and $S(x,y)$ is a small equivalence relation on $B$ such that $R(x,y) \Leftrightarrow S(\tau(x),\tau(y)) \; \mathsf{true} \; [x,y \in A]$ holds, then the functional term
\[
[a] \in A/R \mapsto [\tau(a)] \in B/S
\]
obtained by passing $\tau(x)$ to the quotient is a canonical isomorphism; it can be formally defined as
\[
\mathsf{El_Q}(z,(x).[\tau(x)]) \in B/S \; [z \in A/R]
\]
\end{enumerate}
\end{definition}

We now derive some fundamental properties about canonical isomorphisms.

Recall that a telescopic substitutions $\gamma$ from a context $\Delta$ to a context $\Gamma \equiv x_1 \in A_1,\dots,x_n\in A_n$ is a list of $n$ terms
\[
\gamma \equiv t_1 \in A_1 \; [\Delta]\,,\,\cdots,\,
t_n \in A_n[t_1/x_1]\cdots[t_{n-1}/x_{n-1}] \; [\Delta]
\]
We write it as the derived judgement $\gamma \in \Gamma \; [\Delta]$. Moreover, if $B \; type \; [\Gamma]$, we write $B[\gamma] \; type \; [\Delta]$ for the substituted type $B[t_1/x_1]\cdots[t_n/x_n]$, and analogously for terms.

\begin{lemma}\label{cansub}
If $\tau \in B \; [\Gamma, x \in A]$ is a canonical isomorphism and $\gamma \in \Gamma \; [\Delta]$ is a telescopic substitution, then also $\tau[\gamma,x] \in B[\gamma] \; [\Delta, x \in A[\gamma]]$ is a canonical isomorphism.
\end{lemma}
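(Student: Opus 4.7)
The plan is structural induction on the derivation that $\tau$ is a canonical isomorphism, dealing with each of the seven clauses of the definition in turn. The pattern is the same every time: since telescopic substitution commutes, up to the usual variable hygiene, with every term and type constructor appearing in the clause, the term $\tau[\gamma,x]$ has exactly the same shape as $\tau$ but with $[\gamma]$ pushed inside; it then remains to verify that the typing and logical side conditions required by the clause are preserved by $[\gamma]$, invoking the induction hypothesis on the sub-isomorphisms where necessary.

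The non-dependent clauses are immediate. For clause~1, if $\varphi \Leftrightarrow \psi \;\mathsf{true}$ is derivable over $\Gamma$, then $\varphi[\gamma] \Leftrightarrow \psi[\gamma] \;\mathsf{true}$ is derivable over $\Delta$ by admissibility of substitution in \emtt, and $\mathsf{true}[\gamma,x] \equiv \mathsf{true}$, so clause~1 applies again. For clause~2 the base collections $\mathsf{N_0}$, $\mathsf{N_1}$, $\mathcal{P}(1)$ and their identity maps are closed expressions and are fixed by $[\gamma]$. For clauses~3 and~4, the eliminators $\mathsf{El_{List}}$ and $\mathsf{El_+}$ commute with $[\gamma]$, so the induction hypothesis on the sub-isomorphisms $\tau$ (and $\sigma$) is all that is needed.

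For the dependent clauses~5, 6, and~7, the essential point is that the inner canonical isomorphism $\sigma$ is defined in a context extended by the variable bound in the outer construction. Concretely, in clause~5 one has $\sigma(x,y) \in B'(\tau(x))$ in the context $\Gamma, x \in A, y \in B(x)$, with $\Gamma$ implicit in the excerpt. The substitution $\gamma \in \Gamma \;[\Delta]$ extends canonically to a telescopic substitution into this longer context, and the induction hypothesis on $\sigma$ along this extension, combined with the induction hypothesis on $\tau$, lets clause~5 apply again to $\tau[\gamma,x]$. Clauses~6 and~7 proceed identically, with the additional check for clause~7 that the equivalence $R(x,y) \Leftrightarrow S(\tau(x),\tau(y)) \;\mathsf{true}$ transports through $[\gamma]$, and for clause~6 that the dependent product of the substituted collections is still a collection, which holds because the collection formation rules are closed under substitution. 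The main obstacle is purely notational: keeping precise track of these extended substitutions and of how $[\gamma]$ distributes over nested binders; once that bookkeeping is set, no content beyond the admissibility of substitution in \emtt\ is required.
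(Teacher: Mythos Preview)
Your proposal is correct and follows exactly the approach the paper takes: the paper's proof consists of the single line ``By induction on the definition of canonical isomorphism,'' and what you have written is precisely a careful unfolding of that induction across the seven clauses. Your elaboration of the dependent clauses and of the side conditions (logical equivalences, collection formation) is accurate and matches what the one-line proof leaves implicit.
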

\begin{proof}
By induction on the definition of canonical isomorphism.
\end{proof}

\begin{proposition}\label{can}
Canonical isomorphisms enjoy the following properties:
\begin{enumerate}
\item identities are canonical isomorphisms;
\item canonical isomorphisms are indeed isomorphisms, and their inverses are again canonical isomorphisms;
\item the composition of two (composable) canonical isomorphisms is a canonical isomorphism;
\item there exists at most one canonical isomorphism between each pair of collections.
\end{enumerate}
\end{proposition}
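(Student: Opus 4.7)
The plan is to prove all four properties by induction, leveraging the extensional features of \emtt\ (proof-irrelevance of propositions, $\eta$-equalities for all type constructors, and reflection of propositional into judgemental equality), and using Lemma~\ref{cansub} to transport canonical isomorphisms under substitution where needed. For (1), I would proceed by induction on the structure of the collection $A$: the cases $A \equiv \mathsf{N_0},\,\mathsf{N_1},\,\mathcal{P}(1)$ are covered directly by clause 2; for a proposition $\varphi$, clause 1 applies with the trivially derivable $\varphi \Leftrightarrow \varphi$, and proof-irrelevance identifies any functional term into $\varphi$ with $\mathsf{true}$; for the inductive type formers ($\mathsf{List}$, $+$, $\Sigma$, $\Pi$, $/$) one invokes the corresponding clause with identities on the component collections obtained from the induction hypothesis, and the $\eta$-equalities of \emtt\ guarantee that the resulting term is judgementally equal to the identity (for instance $\mathsf{El}_\Sigma(z,(x,y).\langle x,y\rangle) = z$ by $\eta$-$\Sigma$).

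For (2), I would construct the inverse simultaneously with its proof of being canonical by induction on the derivation witnessing that $\tau$ is a canonical isomorphism. Clause 1 uses symmetry of logical equivalence; clauses 3, 4, 5, 7 apply the same clause to the inverses of their components (canonical by induction); clause 6 swaps the roles of source and target appropriately since it is contravariant in $\tau$. That the resulting pair of terms is genuinely mutually inverse reduces in each case to the $\beta$-rules of the relevant eliminator combined with the $\eta$-rules. For (4), uniqueness follows by induction on the collection (equivalently, on the derivation that $\tau$ is a canonical isomorphism), using that the various type constructors are freely generated, so the outermost clause applicable to a given pair of collections is forced; clause 1 is then immediate by proof-irrelevance, and the remaining clauses reduce to uniqueness of the component isomorphisms by the induction hypothesis together with $\eta$.

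For (3), composition is the most delicate step and the one I expect to be the main obstacle. Given canonical isomorphisms $\tau_1 \in B \; [x \in A]$ and $\tau_2 \in C \; [y \in B]$, I would argue by induction on the derivation of $\tau_1$, case-analysing the outer clause of $\tau_2$. Because each clause commits to a specific outer type structure of source and target, the outermost clauses must match, and the induction hypothesis produces canonical compositions of the components. The genuine difficulty lies in the dependent clauses 5, 6, 7, where the second component of one isomorphism must be pre-composed with the first component of the other along a substitution: for instance, composing two $\Sigma$-isomorphisms requires pairing $\tau_2 \circ \tau_1$ on the base with a composite whose type is obtained by substituting $\tau_1$ into the second component of $\tau_2$. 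This is exactly where Lemma~\ref{cansub} is invoked, to ensure the substituted term remains canonical. Finally, equality of the resulting canonical form with the literal composition $\tau_2[\tau_1/y]$ is verified using $\beta$- and $\eta$-rules of the eliminators.
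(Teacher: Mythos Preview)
Your proposal is correct and follows essentially the same approach as the paper: induction on the structure of the collection (for identities) or on the definition of canonical isomorphism (for inverses, composition, and uniqueness), using the $\eta$-equalities of \emtt, proof-irrelevance for the propositional clause, and symmetry/transitivity of $\Leftrightarrow$. One small refinement: Lemma~\ref{cansub} is needed not only in point~(3) but already in point~(2) for the dependent clauses---for instance, in the $\Pi$ case the inverse of the fibre component must be substituted along $\tau^{-1}$ before it has the right type, and the paper makes this explicit.
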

\begin{proof}
The proof is analogous to the one performed for \hott\ in Proposition 4.11 of \cite{MFhott}.

\textit{Point $1$} follows by induction on the construction of the collection, exploiting the $\eta$-equalities of the corresponding constructors.

\textit{Point $2$} follows by induction on the definition of canonical isomorphism; in particular, thanks to the fact that $\Leftrightarrow$ is symmetric in the case of propositions. We spell out the case of dependent products. By induction hypothesis, there exist the two canonical inverses
\[
\tau^{-1}(x) \in A' \; [x \in A] \qquad \sigma^{-1}(x,y) \in B(\tau(x)) \; [x \in A',y \in B'(x)]
\]
if we substitute the second by the first we obtain
\[
\sigma^{-1}(\tau^{-1}(x),y) \in B(\tau(\tau^{-1}(x))) = B(x) \; [x \in A, y \in B'(\tau^{-1}(x))]
\]
which is again canonical thanks to Lemma \ref{cansub}, so that we can consider the canonical 
isomorphism
\[
(\lambda x \in A)\sigma^{-1}(\tau^{-1}(x),f(\tau^{-1}(x))) \in (\Pi x \in A)B(x) \; [f \in (\Pi x \in A')B'(x)]
\]
which can be easily checked to be the inverse.

For \textit{point 3}, observe that two objects are related by a canonical isomorphism only if they have the same outermost constructor or if they are both propositions; in the latter case, we rely on the transitivity of $\Leftrightarrow$; in the former case, we proceed by induction on the outermost constructor. We spell out the case of the dependent product. Suppose to have the following canonical isomorphisms
\begin{align*}
& \tau(x) \in A \; [x \in A'] \\
& \tau'(x) \in A' \; [x \in A''] \\
& \sigma(x,y) \in B'(x) \; [x \in A', y \in B(\tau(x))] \\
& \sigma'(x,y) \in B''(x)  \; [x \in A'', y \in B'(\tau'(x))]
\end{align*}
By Lemma \ref{cansub} also the following substituted morphism is canonical
\[
\sigma(\tau'(x),y) \in B'(\tau'(x)) \;  \; [x \in A'', y \in B(\tau(\tau'(x)))]
\]
By inductive hypothesis, the following isomorphisms obtained by composition are canonical
\[
\tau(\tau'(x)) \in A \; [x \in A''] \quad \sigma'(x,\sigma(\tau'(x),y)) \in B''(x) \; [x \in A'', y \in B(\tau(\tau'(x)))]
\]
We must check that the composition of the two canonical isomorphisms 
\begin{align*}
(\lambda x \in A')\sigma(x,\mathsf{Ap}(f,\tau(x))) & \in (\Pi x \in A')B'(x) \; [f \in (\Pi x \in A)B(x)] \\
(\lambda x \in A')\sigma'(x,\mathsf{Ap}(f,\tau'(x))) & \in (\Pi x \in A'')B''(x) \; [f \in (\Pi x \in A')B'(x)]
\end{align*}
is canonical, but their composition is equal to
\begin{align*}
& (\lambda x \in A'')\sigma'(x,\mathsf{Ap}( (\lambda x \in A')\sigma(x,\mathsf{Ap}(f,\tau(x))) ,\tau'(x))) = \\
& (\lambda x \in A'')\sigma'(x,\sigma(\tau'(x),\mathsf{Ap}(f,\tau(\tau'(x)))))
\end{align*}
which is canonical by definition.

\textit{Point 4} is trivial in the case of propositions; in the other cases, it is proven by induction on the outermost constructor of the two collections.
\end{proof}

We can extend the notion of canonical isomorphisms to contexts of \emtt.

\begin{definition}
We inductively define a family of
telescopic substitutions between contexts, called again \textit{canonical isomorphisms}:
\begin{itemize}
\item the empty telescopic substitution between empty contexts $() \in () \; [()]$ is a canonical isomorphism;
\item if $A \; col \; [\Gamma]$ and $B \; col \; [\Delta]$ are two dependent collections, $\sigma \in \Delta \; [\Gamma]$ is a canonical isomorphism between contexts, and $\tau \in B[\sigma] \; [\Gamma, x \in A]$ is a canonical isomorphism between collections, then the extension $\sigma, \tau \in (\Delta, x \in B) \; [\Gamma, x \in A]$ is a canonical isomorphism.
\end{itemize}
\end{definition}
It is easy to check by induction that canonical isomorphisms between contexts inherit the properties of Proposition \ref{can}.

\begin{definition}\label{related}
We say that \textit{two contexts $\Gamma$ and $\Delta$ are canonically related} if there exists a (necessarily unique) canonical isomorphism between them.

We say that \textit{two dependent types $A \; type\; [\Gamma]$ and $B \; type\; [\Delta]$ are canonically related} if their contexts are canonically related, and there exists a (necessarily unique) canonical isomorphism between $A$ and $B[\sigma]$, where $\sigma \in \Delta \; [\Gamma]$ is the canonical isomorphism between contexts.

Finally, we say that \textit{two telescopic substitutions $\gamma \in \Gamma \; [\Gamma']$ and $\delta \in \Delta \; [\Delta']$ are canonically related} if both their domain and codomain are canonically related and the compositions of telescopic substitutions depicted pictorially in the following square are judgementally equal
\[
\begin{tikzcd}
\Gamma' \arrow[r, "\gamma"] \arrow[d, "\sigma'"'] & \Gamma \arrow[d, "\sigma"] \\
\Delta' \arrow[r, "\delta"]                       & \Delta                    
\end{tikzcd}
\]
where $\sigma$ and $\sigma'$ are the canonical isomorphisms between contexts. As a special case of the latter definition, we say that two dependent terms are canonically related if they are so as telescopic substitutions; namely, \textit{two terms $a \in A \; [\Gamma]$ and $b \in B \; [\Delta]$ are canonically related} if the dependent collections they belong to are canonically related and the following equality holds
\[
\tau(a) = b[\sigma] \in B[\sigma] \; [\Gamma]
\]
where $\sigma \in \Delta \; [\Gamma]$ is the canonical isomorphism between contexts, and $\tau(x) \in B[\sigma] \; [\Gamma, x \in A]$ is the canonical isomorphism between collections.
\end{definition}

\begin{remark}
We could have organised the definitions of canonical isomorphisms using the language of category theory. In particular, we could have considered the syntactic category $\mathbf{Ctx}$ of contexts and telescopic substitutions up to judgemental equality. In that case, the square depicted above in the definition of canonically related contexts could have been interpreted as a diagram of $\mathbf{Ctx}$, and formally required to be commutative.
\end{remark}

The first three points of Proposition \ref{can} imply that being canonically related (for contexts, collections, telescopic substitutions and terms) is an equivalence relation. Moreover, the following property of preservation under substitution holds.
\begin{lemma}\label{cansubpro}
Let $\gamma \in \Gamma \; [\Gamma']$ and $\delta \in \Delta \; [\Delta']$ be two canonically related telescopic substitutions. If $A \; type \;  [\Gamma]$ and $B \; type \;  [\Delta]$ are canonically related types, then also $A[ \gamma ] \; type \;  [\Gamma']$ and $B[ \delta ] \; type \;  [\Delta']$ are canonically related types. Moreover, if $a \in A \;  [\Gamma]$ and $b \in B \; [\Delta]$ are canonically related terms, then also $a[ \gamma ] \in A[ \gamma ] \;  [\Gamma']$ and $b[ \delta ] \in B[ \delta ] \;  [\Delta']$ are canonically related terms.
\end{lemma}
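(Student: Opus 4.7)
The plan is to reduce both parts of the lemma to Lemma \ref{cansub} together with the commutative square built into Definition \ref{related}. Unpack the data: let $\sigma \in \Delta \; [\Gamma]$ and $\sigma' \in \Delta' \; [\Gamma']$ be the context isomorphisms witnessing that the codomains and the domains of $\gamma$ and $\delta$ are canonically related. The assumption that $\gamma$ and $\delta$ are canonically related is then the judgemental equality $\sigma[\gamma] = \delta[\sigma']$ of telescopic substitutions from $\Gamma'$ to $\Delta$. Similarly, the canonical relatedness of $A$ and $B$ provides a canonical isomorphism $\tau(x) \in B[\sigma] \; [\Gamma, x \in A]$.

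For the type part, first apply Lemma \ref{cansub} to $\tau$ along $\gamma \in \Gamma \; [\Gamma']$: this yields a canonical isomorphism $\tau[\gamma, x] \in B[\sigma][\gamma] \; [\Gamma', x \in A[\gamma]]$. Then use commutativity of the square to identify its codomain with $B[\delta][\sigma']$ via
\[
B[\sigma][\gamma] = B[\sigma[\gamma]] = B[\delta[\sigma']] = B[\delta][\sigma'].
\]
Thus $\tau[\gamma, x]$ is a canonical isomorphism between $A[\gamma]$ and $B[\delta][\sigma']$, which, together with $\sigma'$ as context isomorphism, certifies that $A[\gamma] \; type \; [\Gamma']$ and $B[\delta] \; type \; [\Delta']$ are canonically related.

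For the term part, assume further $\tau(a) = b[\sigma] \in B[\sigma] \; [\Gamma]$. What is needed is the corresponding equation at $A[\gamma]$, namely that $\tau[\gamma, x]$ evaluated at $a[\gamma]$ agrees with $b[\delta][\sigma']$. This follows from the chain
\[
\tau[\gamma, x][a[\gamma]/x] = \tau(a)[\gamma] = b[\sigma][\gamma] = b[\delta][\sigma'],
\]
where the first equality is standard commutation of substitutions, the second is the hypothesis on $a$ and $b$, and the third is the commutative square applied once more.

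I do not expect any serious obstacle: the argument is essentially bureaucratic substitution calculus. The only genuine input beyond routine manipulation is Lemma \ref{cansub}, which is already available; the commutativity of the square from Definition \ref{related} is the single nontrivial ingredient, and it enters twice — once to rewrite the codomain of $\tau[\gamma, x]$ as $B[\delta][\sigma']$, and once at the end of the term-level chain.
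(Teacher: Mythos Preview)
Your proposal is correct and is essentially the same approach as the paper's, which simply states that the result follows from Lemma~\ref{cansub} and Definition~\ref{related}. You have merely unpacked that one-line proof by making explicit the substitution calculus and the two uses of the commuting square $\sigma[\gamma] = \delta[\sigma']$.
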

\begin{proof}
It follows from Lemma \ref{cansub} and Definition \ref{related}.
\end{proof}

Finally, we notice that we can always correct a type (resp. a term) into a canonically related one to match a given context (type) canonically equivalent to the original one.
\begin{lemma}\label{correct}
Let $A \; type \; [\Gamma]$, and $\Delta \; ctx$ canonically related to $\Gamma \; ctx$, then there exists $\Tilde{A} \; type \; [\Delta]$ canonically related to $A \; type \; [\Gamma]$. 

Analogously, if $a \in A \; [\Gamma]$ is a term and $B \; col \; [\Delta]$ is a collection canonically related to $A \; col \; [\Gamma]$, then there there exists a term $\Tilde{a} \in B \; [\Delta]$ canonically related to $a \in A \; [\Gamma]$.
\end{lemma}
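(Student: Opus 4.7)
The idea is to transport $A$ (respectively $a$) along the canonical inverse of the given isomorphism between contexts. Let $\sigma \in \Delta \; [\Gamma]$ be the canonical isomorphism witnessing that $\Gamma$ and $\Delta$ are canonically related, and let $\sigma^{-1} \in \Gamma \; [\Delta]$ be its canonical inverse, which exists because canonical isomorphisms between contexts inherit the properties of Proposition \ref{can}. I would then simply set $\tilde{A} :\equiv A[\sigma^{-1}]$, so that $\tilde{A} \; type \; [\Delta]$ is well-formed by the usual behaviour of substitution.

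To conclude that $A$ and $\tilde{A}$ are canonically related, I need a canonical isomorphism between $A$ and $\tilde{A}[\sigma] \equiv A[\sigma^{-1}][\sigma]$. The key point is that $\sigma^{-1}[\sigma]$ equals $\mathrm{id}_\Gamma$ \emph{judgementally} as telescopic substitutions: the extensional character of \emtt\ ensures that the propositional inverse equations carried by each type-level canonical isomorphism reflect into judgemental equalities, and this strictness lifts component-wise through the inductive construction of canonical isomorphisms between contexts. Consequently $\tilde{A}[\sigma] = A$, and the identity functional term is a canonical isomorphism between the two by Proposition \ref{can}(1), providing the required canonical relation.

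For the term case, the strategy is analogous: letting $\tau(x) \in B[\sigma] \; [\Gamma, x \in A]$ be the canonical isomorphism witnessing the canonical relation between $A$ and $B$, I would set $\tilde{a} :\equiv \tau(a)[\sigma^{-1}]$. Since $B[\sigma][\sigma^{-1}] = B$ by the same judgemental composition argument, $\tilde{a}$ is indeed a term of $B$ in context $\Delta$; moreover, the defining equation $\tau(a) = \tilde{a}[\sigma]$ for canonical relation of terms reduces to $\tau(a) = \tau(a)[\sigma^{-1}[\sigma]] = \tau(a)$, which is immediate.

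The main obstacle is establishing rigorously the judgemental identity $\sigma^{-1}[\sigma] = \mathrm{id}_\Gamma$ for canonical isomorphisms between contexts, which should be proved by induction on the inductive clause defining them, invoking at each step the reflection rule of \emtt\ to turn the propositional inverse equations of each component canonical isomorphism into judgemental ones, and then gluing these strict identities along the telescopic extension.
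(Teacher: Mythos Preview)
Your proposal is correct and follows exactly the paper's approach: the paper also sets $\tilde{A} :\equiv A[\sigma^{-1}]$ and $\tilde{a} :\equiv \tau(a)[\sigma^{-1}]$ and asserts that ``the same $\sigma$ and $\tau$ witness'' the required canonical relations. You have simply made explicit the one point the paper leaves tacit, namely that $\sigma^{-1}[\sigma]$ and $\sigma[\sigma^{-1}]$ are judgementally the identity substitutions, which indeed follows from equality reflection in \emtt\ applied componentwise to the propositional inverse equations of Proposition~\ref{can}(2); once that holds, $\tilde{A}[\sigma] = A$ and $\tilde{a}[\sigma] = \tau(a)$ on the nose, so the identity (Proposition~\ref{can}(1)) and the given $\tau$ finish the job.
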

\begin{proof}
Consider $\Tilde{A} :\equiv A[\sigma^{-1}] \; type \; [\Delta]$ and $\Tilde{a} :\equiv \tau(a)[\sigma^{-1}] \in B \; [\Delta]$, where $\sigma \in \Delta \; [\Gamma]$ and $\tau \in B[\sigma] \; [\Gamma, x \in A]$ are some  assumed existing canonical isomorphisms. The same $\sigma$ and $\tau$ witness that $A \; type \; [\Gamma]$ and $\Tilde{A} \; type \; [\Delta]$ are canonically related types, and that $a \in A \; [\Gamma]$ and $\Tilde{a} \in B \; [\Delta]$ are canonically related terms
\end{proof}

\subsection{Interpreting \texorpdfstring{$\mathbf{emTT}+\mathsf{propext}$}{emTT+propext} into \emtt}
With the machinery of canonical isomorphisms set up, we are ready to interpret $\mathbf{emTT}+\mathsf{propext}$ into \emtt. The idea is to define an identity interpretation \textit{up to canonical isomorphisms}.

As customary in type theory, we first define \textit{a priori partial} interpretation functions on the pre-syntax of $\mathbf{emTT}+\textsf{propext}$; the Validity Theorem \ref{validitypropext} will ensure that such functions are total when restricted to the derivable judgements of $\mathbf{emTT}+\textsf{propext}$. More in detail, we define three partial functions which send:
\begin{enumerate}
\item context judgements $\decorate{\Gamma} \; ctx$ to an equivalence class $\sem{\decorate{\Gamma} \; ctx}$ of canonically related $\mathbf{emTT}$-contexts;
\item type judgements $\decorate{A} \; type \; [\decorate{\Gamma}]$ to an equivalence class $\sem{\decorate{ A } \; type \; [\decorate{\Gamma }]}$ of canonically related $\mathbf{emTT}$-collections such that all its representatives are defined in contexts belonging to $\sem{\decorate{ \Gamma } \; ctx}$, and such that at least one among them is of kind $type$;
\item term judgements $\decorate{a} \in \decorate{A} \; [\decorate{\Gamma}]$ to an equivalence class $\sem{\decorate{a} \in \decorate{A} \; [\decorate{\Gamma}]}$ of canonically related $\mathbf{emTT}$-terms such that all its representatives are defined in contexts belonging to $\sem{\decorate{ \Gamma } \; ctx}$.
\end{enumerate}

In the following we use the parenthesis $\eqclass{-}$ to denote equivalence classes.

\begin{definition}[Interpretation]\label{intpre}
The three functions specified above are defined by recursion on pre-syntax of $\mathbf{emTT}+\textsf{propext}$ where, in each clause, we interpret the constructor in case (be it of contexts, types or terms) with the same constructor in the target theory \emtt. We spell out the case of contexts, variables, the canonical true term, and dependent product.
\\

\textit{Contexts and variables.}
\begin{itemize}
\item $\sem{() \; ctx} :\equiv \eqclass{() \; ctx}$
\item $\sem{\decorate{\Gamma}, x \in \decorate{A}  \; ctx} :\equiv \eqclass{ \Gamma' , x \in A' \; ctx}$

provided that $\sem{\decorate{A} \; col \; [\decorate{\Gamma}]} \equiv \eqclass{A' \; col \; [\Gamma']}$
\item $\sem{x \in A \; [ \decorate{\Gamma} , x \in \decorate{A} , \decorate{\Delta} ] } :\equiv \eqclass{x \in A' \; [\Gamma' , x \in A' , \Delta']}$

provided that $\sem{\decorate{\Gamma} , x \in \decorate{A} , \decorate{\Delta} \; ctx} \equiv \eqclass{\Gamma' , x \in A' , \Delta'  \; ctx}$
\end{itemize}

\textit{True term.}
\begin{itemize}
\item $\sem{\mathsf{true} \in \varphi \; [\Gamma]} :\equiv \eqclass{\mathsf{true} \in \varphi' \; [\Gamma']}$

provided that $\sem{\varphi \; prop \; [\Gamma]} :\equiv \eqclass{\varphi' \; prop \; [\Gamma']}$
\end{itemize}

\textit{Dependent products.}
\begin{itemize}
\item
$\sem{(\Pi x \in \decorate{A})\decorate{B} \; type \; [\decorate{\Gamma}]} :\equiv \eqclass{(\Pi x \in A')B' \; type \; [\Gamma']}$
    
provided that $\sem{\decorate{B} \; type \; [\decorate{\Gamma}, x \in \decorate{A}]} \equiv \eqclass{B' \; type \; [\Gamma', x \in A']}$

\item $\sem{(\lambda x \in \decorate{A})\decorate{b} \in (\Pi x \in \decorate{A})\decorate{B} \; [\decorate{\Gamma}]} :\equiv \eqclass{(\lambda x \in A')b' \in (\Pi x \in A')B' \; [\Gamma']}$

provided that $\sem{\decorate{b} \in \decorate{B} \; [\decorate{\Gamma}, x \in \decorate{A}]} \equiv \eqclass{b' \in B'  \; [\Gamma', x \in A']}$

\item $\sem{\mathsf{Ap}(\decorate{f},\decorate{a}) \in B[a/x] \; [\decorate{\Gamma}]} :\equiv \eqclass{\mathsf{Ap}(f',a') \in B'[a'/x] \; [\Gamma']}$

provided that $\sem{\decorate{f} \in (\Pi x \in A)B \; [\decorate{\Gamma}]} \equiv \eqclass{f' \in (\Pi x \in A')B' \; [\Gamma']}$

and $\sem{\decorate{a} \in A \; [ \decorate{\Gamma}]} \equiv \eqclass{a' \in A' \; [\Gamma']}$

\end{itemize}
The interpretation of the other constructors is defined analogously.
\end{definition}

To smoothly state the substitution lemma, we define in an analogous way a fourth partial function sending judgements of the derived form $\decorate{\gamma} \in \decorate{\Gamma} \; [\decorate{\Delta}]$ to an equivalence class of \emtt-canonically related telescopic substitutions $\sem{\decorate{\gamma} \in \decorate{\Gamma} \; [\decorate{\Delta}]}$ defined in contexts belonging to $\sem{\decorate{\Delta} \; ctx}$. We then have the following.

\begin{lemma}[Substitution]
\label{sub}
Assume $\sem{\decorate{\gamma} \in \decorate{\Gamma} \; [\decorate{\Delta}]} \equiv \eqclass{\gamma' \in \Gamma' \; [\Delta']}$ holds, then:
\begin{enumerate}
\item 
$\sem{\decorate{A} \; type \;[\decorate{\Gamma}]} \equiv \eqclass{A' \; type \; [\Gamma']}$ implies $\sem{\decorate{A}[\decorate{\gamma}] \; type \; [\decorate{\Delta}]} \equiv \eqclass{A'[\gamma'] \; type \; [\Delta']}$;
\item $\sem{\decorate{a} \in \decorate{A} \; [\decorate{\Gamma}]} \equiv \eqclass{a' \in A' \; [\Gamma']}$ implies

$\sem{\decorate{a}[\decorate{\gamma}] \in \decorate{A}[\decorate{\gamma}] \; [\decorate{\Delta}]} \equiv \eqclass{a'[\gamma'] \in A'[\gamma'] \; [\Delta']}$.
\end{enumerate}
\end{lemma}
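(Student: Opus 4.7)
The proof proceeds by simultaneous induction on the pre-syntax of pre-types $\decorate{A}$ and pre-terms $\decorate{a}$, following the same structure used to define the partial interpretation functions in Definition \ref{intpre}. Since the interpretation commutes with every constructor of the source theory on the nose, and substitution also commutes with every constructor in the pre-syntax, each inductive step reduces to assembling the inductive hypotheses via the matching clause of the interpretation. The glue that makes the equivalence classes line up correctly is Lemma \ref{cansubpro}, which ensures that canonically related types and terms remain canonically related after substituting along canonically related telescopic substitutions.

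For the base cases I would treat variables separately: if $\decorate{x} \in \decorate{A} \; [\decorate{\Gamma}]$ is a variable occurring in $\decorate{\Gamma}$, then by definition of $\sem{\decorate{\gamma} \in \decorate{\Gamma} \; [\decorate{\Delta}]}$ the substitution $\decorate{\gamma}$ assigns to $\decorate{x}$ a canonically related term in the appropriate collection of $\Delta'$, and this is exactly the representative of $\sem{\decorate{x}[\decorate{\gamma}] \in \decorate{A}[\decorate{\gamma}] \; [\decorate{\Delta}]}$. For the inductive cases on a constructor, say the dependent product $(\Pi x \in \decorate{C})\decorate{D}$, I would extend $\decorate{\gamma}$ with the fresh variable $x$ to obtain a canonical substitution $\decorate{\gamma}, x \in (\decorate{\Gamma}, x \in \decorate{C}) \; [\decorate{\Delta}, x \in \decorate{C}[\decorate{\gamma}]]$, apply the inductive hypothesis to $\decorate{D}$ in the extended context and to $\decorate{C}$ in $\decorate{\Gamma}$, and then invoke the $\Pi$-clause of the interpretation together with Lemma \ref{cansubpro} to conclude that the interpretation of $((\Pi x \in \decorate{C})\decorate{D})[\decorate{\gamma}]$ has representative $(\Pi x \in C'[\gamma'])D'[\gamma', x]$, which is exactly $((\Pi x \in C')D')[\gamma']$. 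All other type and term constructors, including $\Sigma$, $+$, $\mathsf{List}$, quotient sets, $\mathcal{P}(1)$, propositional connectives, propositional equality, and the proof-term $\mathsf{true}$, are handled by the same template.

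The main point requiring care is precisely the handling of binding constructors: one must verify that if $\decorate{\gamma} \in \decorate{\Gamma} \; [\decorate{\Delta}]$ is interpreted by $\gamma' \in \Gamma' \; [\Delta']$, then its extension $\decorate{\gamma}, x$ used in the recursive call is interpreted by a canonical representative of $\gamma', x \in (\Gamma', x \in C'[\gamma']) \; [\Delta', x \in C'[\gamma']]$; this is immediate from the inductive clause defining the interpretation of substitutions, together with the fact that the identity on $C'[\gamma']$ is a canonical isomorphism by point 1 of Proposition \ref{can}. Once this verification is made, the propagation of canonical relatedness through the whole derivation is purely bookkeeping. The uniqueness of representatives up to canonical isomorphism is guaranteed by point 4 of Proposition \ref{can}, so the resulting equivalence classes are well-defined and match as stated.
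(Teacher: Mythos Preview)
Your proposal is correct and follows exactly the approach the paper indicates: the paper's own proof is the single line ``By induction on the expressions $\decorate{A}$ and $\decorate{a}$,'' and your argument is a faithful elaboration of that induction. Your invocations of Lemma~\ref{cansubpro} and Proposition~\ref{can} are a bit more machinery than the bare induction strictly needs (since the interpretation commutes with each constructor on the nose, the inductive step is pure bookkeeping once compatible representatives are fixed), but they are not wrong and they make explicit why the equivalence classes stay aligned.
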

\begin{proof}
By induction on the expressions $\decorate{A}$ and $\decorate{a}$.
\end{proof}

\begin{theorem}[Validity]\label{validitypropext}
\begin{enumerate}
\item if $\mathbf{emTT}+\mathsf{propext} \vdash \decorate{\Gamma} \; ctx$, then $\sem{\decorate{\Gamma}}$ is defined;
\item if $\mathbf{emTT}+\mathsf{propext} \vdash \decorate{A} \; type \; [\decorate{\Gamma}]$, then $\sem{\decorate{A} \; type \; [\decorate{\Gamma}]}$ is defined;
\item if $\mathbf{emTT}+\mathsf{propext} \vdash \decorate{a} \in \decorate{A} \; [\decorate{\Gamma}]$, then $\sem{\decorate{a} \in \decorate{A} \; [\decorate{\Gamma}]}$ is defined and all its terms are defined in types belonging to $\sem{\decorate{A} \; col \; [\decorate{\Gamma}]}$;
\item if $\mathbf{emTT}+\mathsf{propext} \vdash \decorate{A} = \decorate{B} \; type \; [\decorate{\Gamma}]$, then $\sem{\decorate{A} \; type \; [\decorate{\Gamma}]} \equiv \sem{\decorate{B} \; type \; [\decorate{\Gamma}]}$;
\item if $\mathbf{emTT}+\mathsf{propext} \vdash \decorate{a} = \decorate{b} \in \decorate{A} \; [\decorate{\Gamma}]$, then $\sem{\decorate{a} \in \decorate{A} \; [\decorate{\Gamma}]} \equiv \sem{\decorate{b} \in \decorate{A} \; [\decorate{\Gamma}]}$.
\end{enumerate}
\end{theorem}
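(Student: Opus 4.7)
The plan is to establish all five points simultaneously by induction on the derivation of the given judgement in $\mathbf{emTT}+\mathsf{propext}$. Since the interpretation of Definition \ref{intpre} is defined recursively on the pre-syntax, each rule case reduces to verifying (i) that the clause appearing in Definition \ref{intpre} assembles representatives produced by the inductive hypothesis into a derivable \emtt-judgement, (ii) that the resulting equivalence class does not depend on the particular choice of representatives, and (iii) that when the rule is an equality rule, the two sides yield the same equivalence class. The toolkit consists of Proposition \ref{can} (canonical isomorphisms form an equivalence relation stable under composition and inversion), Lemma \ref{sub} (substitution commutes with the interpretation), Lemma \ref{cansubpro} (canonical relatedness is preserved under substitution), and Lemma \ref{correct} (any type or term can be relocated along a canonical isomorphism between its ambient context or type).

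For the formation, introduction, and elimination rules of each type constructor, the verification is routine: one picks representatives of the premises using the inductive hypothesis, adjusts them via Lemma \ref{correct} so that they share a common context and ambient type, and then applies the corresponding \emtt-constructor; well-definedness on equivalence classes then follows from the matching clause in the definition of canonical isomorphism, which is designed exactly to provide closure under that very constructor. The contravariant case of $\Pi$ requires inverting a canonical isomorphism on the domain by Proposition \ref{can}(2). The substitution rules are immediate from Lemma \ref{sub}, while conversion rules (reindexing a term along a type equality) rely again on Lemma \ref{correct}. Computation and $\eta$-equalities for the \emtt-constructors hold in the target theory itself, so they transfer unchanged; in particular the rule $\textsf{C-}\mathcal{P}(1)$ is valid in \emtt\ and hence a fortiori up to canonical relatedness.

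The only genuinely new rule is $\textsf{propext}$ (and its small-proposition variant $\mathsf{prop_{s}ext}$): assuming the inductive hypothesis on the premise $\varphi \Leftrightarrow \psi \; \mathsf{true}$, one obtains representatives $\varphi'$ and $\psi'$ of $\sem{\varphi \; prop \; [\Gamma]}$ and $\sem{\psi \; prop \; [\Gamma]}$ in a common context such that $\varphi' \Leftrightarrow \psi' \; \mathsf{true}$ is derivable in \emtt; clause $1$ of the definition of canonical isomorphism then provides the canonical isomorphism $\mathsf{true} \in \psi' \; [x \in \varphi']$, witnessing that $\varphi'$ and $\psi'$ are canonically related, so $\sem{\varphi \; prop \; [\Gamma]} \equiv \sem{\psi \; prop \; [\Gamma]}$ as required. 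The main obstacle throughout is the systematic bookkeeping needed to align, at each inductive step, the independently chosen representatives of multiple premises so that they live in a common context and refer to canonically related ambient types; this is precisely what the auxiliary Lemmas \ref{cansub}, \ref{cansubpro}, and \ref{correct} are designed to automate, and once they are in place the argument proceeds uniformly across all constructors.
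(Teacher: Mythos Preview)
Your proposal is correct and follows essentially the same approach as the paper: both proceed by simultaneous induction on derivations, invoking Proposition~\ref{can} and Lemmas~\ref{cansubpro}, \ref{correct}, and~\ref{sub} to handle representative choice and substitution, and both treat $\mathsf{propext}$ by observing that the derivability of $\varphi' \Leftrightarrow \psi'$ in \emtt\ is precisely the condition for the two propositions to be canonically related. The paper's proof differs only in presentation, spelling out the lambda-abstraction case in full detail rather than describing the general pattern you outline.
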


\begin{proof}
By induction on the derivations of $\mathbf{emTT}+\mathsf{propext}$, using Proposition \ref{can} and Lemmas \ref{cansubpro}, \ref{correct}, and \ref{sub}. We spell out the relevant cases of lambda abstraction and propositional extensionality.

For the case of lambda abstraction, it is trivial to check that the \emtt-judgements used to interpret it are actually derivable, and that the side condition on the contexts holds. We also then need to check that the definition of the equivalence class does not depend on the choice of representatives. For that, assume that $b \in B \; [\Gamma, x \in A]$ and $b' \in B' \; [\Gamma', x \in A']$ are two canonically related terms. Thus, we know there are canonical isomorphisms
\begin{align*}
\sigma & \in \Gamma' \; [\Gamma] \\
\tau(x) & \in A'[\sigma ] \; [\Gamma, x \in A] \\
\rho(x,y) & \in B'[\sigma,\tau] \; [\Gamma,x \in A, y \in B]
\end{align*}
such that
\begin{equation}
\rho(x,b) = b'[\sigma, \tau] \in B[\sigma, \tau] \; [\Gamma, x \in A]
\end{equation}
By Proposition \ref{can} and Lemma \ref{cansub} also the following are canonical isomorphisms.
\begin{align*}
\tau^{-1}(x) & \in A \; [\Gamma , x \in A'[\sigma ]] \\
\rho(\tau^{-1}(x),y) & \in B'[\sigma,x] \; [\Gamma,x \in A'[\sigma], y \in B(\tau^{-1}(x))]
\end{align*}
Moreover, by applying the term $\rho^{-1}$ to (1) also the followings hold.
\begin{align*}
b = \rho^{-1}(x,b'[\sigma, \tau]) & \in B \; [\Gamma, x \in A] \\
(\lambda x \in A)b = (\lambda x \in A)\rho^{-1}(x,b'[\sigma, \tau]) & \in (\Pi x \in A)B \; [\Gamma]
\end{align*}

By definition of canonical isomorphism between dependent products, we have that the term
\[
\zeta(f) :\equiv (\lambda x \in A'[\sigma])s(\tau^{-1}(x),\mathsf{Ap}(f,\tau^{-1}(x)))
\]

is a canonical isomorphisms between $(\Pi x \in A)B$ and $(\Pi x \in A'[\sigma])B'[\sigma,x] \equiv ((\Pi x \in A')B')[\sigma]$. Finally, we can check that
\begin{align*}
\zeta((\lambda x \in A)b) & = \zeta((\lambda x \in A)\rho^{-1}(x,b'[\sigma, \tau])) \\
& = (\lambda x \in A'[\sigma])\rho(\tau^{-1}(x), \rho^{-1}(\tau^{-1}(x),b'[\sigma ,\tau][\tau^{-1}/x])) \\
& = (\lambda x \in A'[\sigma])b'[\sigma, x]\\
& \equiv ((\lambda x \in A')b')[\sigma] \in ((\Pi x \in A')B')[\sigma] \; [\Gamma]
\end{align*}
Thus, we have concluded that $(\lambda x \in A)b \in (\Pi x \in A)B \; [\Gamma]$ and $(\lambda x \in A')b' \in (\Pi x \in A')B' \; [\Gamma']$ are canonically related terms.
\\

Propositional extensionality is validated as follows. By inductive hypothesis on the first premise, we know that, for some $\varphi' \; prop \; [\Gamma']$, we have $\sem{\decorate{\varphi} \; prop \; [\decorate{\Gamma}]}\equiv \eqclass{\varphi' \; prop \; [\Gamma']}$; by inductive hypothesis on the second premise corrected by Lemma \ref{correct}, we know that $\sem{\decorate{\psi} \; prop \; [\decorate{\Gamma}]}\equiv \eqclass{\psi' \; prop \; [\Gamma']}$ for some proposition $\psi'$ defined in the same context $\Gamma'$ of $\varphi'$. By definition of the interpretation we then have
$
\sem{\decorate{\varphi} \Leftrightarrow \decorate{\psi} \; prop \; [\decorate{\Gamma}]} \equiv \eqclass{\varphi' \Leftrightarrow \psi' \; prop \; [\Gamma']}$.
Finally, by inductive hypothesis on the third premise, we know that the interpretation of $\sem{\mathsf{true} \in \decorate{\varphi} \Leftrightarrow \decorate{\psi} \; prop \; [\decorate{\Gamma}]}$ is well defined; in particular, this means that $\mathsf{true} \in \varphi' \Leftrightarrow \psi'$ is derivable in \emtt, but this amounts to $\varphi'$ and $\psi'$ being canonically related, which in turn implies
$
\sem{\decorate{\varphi} \; prop \; [\decorate{\Gamma}]}\equiv \eqclass{\varphi' \; prop \; [\Gamma']} \equiv \eqclass{\psi' \; prop \; [\Gamma']} \equiv \sem{\decorate{\psi} \; prop \; [\decorate{\Gamma}]}
$.
\end{proof}

The interpretation enjoys the following property, which allows it to be seen as a retraction of the identity interpretation of $\mathbf{emTT}$ into $\mathbf{emTT}+\mathsf{propext}$.

\begin{proposition}\label{cons}
\begin{enumerate}
\item If $\mathbf{emTT} \vdash \Gamma \; ctx$, then $\sem{\Gamma \; ctx} \equiv \eqclass{\Gamma \; ctx}$;
\item if $\mathbf{emTT} \vdash A \; type \; [\Gamma]$, then $\sem{A \; type \; [\Gamma]} \equiv \eqclass{A \; type \; [\Gamma]}$;
\item if $\mathbf{emTT} \vdash a \in A \; [\Gamma]$, then $\sem{a \in A \; [\Gamma]} \equiv \eqclass{a \in A \; [\Gamma]}$.
\end{enumerate}
\end{proposition}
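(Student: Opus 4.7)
The plan is to prove all three clauses simultaneously by mutual induction on the \emtt-derivation of the judgements, handling the corresponding equality statements (analogues of clauses (4) and (5) of Theorem \ref{validitypropext}) in tandem. The essential observation is that each clause of Definition \ref{intpre} interprets a syntactic constructor of $\mathbf{emTT}+\mathsf{propext}$ by the same constructor in the target theory \emtt, so on a derivation that avoids the $\mathsf{propext}$ rule the interpretation coincides, up to choice of representative, with the identity translation.

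For contexts, the empty context $()$ interprets to $\eqclass{() \; ctx}$ by definition. For an extension $\Gamma, x \in A$, the inductive hypothesis on the shorter context gives $\sem{\Gamma \; ctx} \equiv \eqclass{\Gamma \; ctx}$, and the inductive hypothesis on the type judgement gives $\sem{A \; col \; [\Gamma]} \equiv \eqclass{A \; col \; [\Gamma]}$; the context-extension clause of Definition \ref{intpre} then produces $\Gamma, x \in A$ itself as a representative of $\sem{\Gamma, x \in A \; ctx}$.

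For types and terms the induction proceeds constructor by constructor in exactly the same fashion: by the inductive hypothesis every subcomponent of the expression admits its original form as a representative of its interpretation, and the corresponding clause of Definition \ref{intpre} then yields the original expression as a representative of the interpretation of the whole. Proposition \ref{can}(1), which ensures that identities are canonical isomorphisms, guarantees that the equivalence class so obtained is indeed $\eqclass{A \; type \; [\Gamma]}$ or $\eqclass{a \in A \; [\Gamma]}$, as asserted. The side conditions on context matching in Definition \ref{intpre} are automatically fulfilled because we consistently pick original representatives throughout.

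I do not foresee any real obstacle: the only rule whose validation in Theorem \ref{validitypropext} genuinely collapses distinct equivalence classes is $\mathsf{propext}$, and this rule is by assumption absent from a pure \emtt-derivation. All the remaining cases — including the delicate manipulation of canonical isomorphisms spelled out for lambda abstraction in the proof of Theorem \ref{validitypropext} — degenerate to the identity situation, since both sides of every canonical-isomorphism equation are automatically identical when the representatives chosen for the subcomponents are the original syntactic subexpressions themselves.
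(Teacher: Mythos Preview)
Your proposal is correct and follows the same approach as the paper, which simply states ``Straightforward by induction on the derivations of $\mathbf{emTT}$.'' You have merely spelled out the routine details that the paper omits, including the observation that the only nontrivial case in Theorem~\ref{validitypropext} was $\mathsf{propext}$, which is absent here.
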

\begin{proof}
Straightforward by induction on the derivations of $\mathbf{emTT}$.
\end{proof}

\begin{corollary}[Conservativity]\label{consprop}
If $\mathbf{emTT} \vdash \varphi \; prop \; [\Gamma]$ and $\mathbf{emTT}+\mathsf{propext} \vdash \varphi \; \mathsf{true}\; [\Gamma]$, then $\mathbf{emTT} \vdash \varphi \; \mathsf{true}\; [\Gamma]$.
\end{corollary}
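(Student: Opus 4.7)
My plan is to deduce the corollary directly from the preceding Validity Theorem \ref{validitypropext} together with the retraction property of Proposition \ref{cons}, with essentially no further work required.

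First, I would apply point 3 of Validity to the hypothesis $\mathbf{emTT}+\mathsf{propext} \vdash \mathsf{true} \in \varphi \; [\Gamma]$: this ensures that the interpretation $\sem{\mathsf{true} \in \varphi \; [\Gamma]}$ is a well-defined equivalence class of canonically related \emtt-terms, each derivable in \emtt\ and defined in a type belonging to $\sem{\varphi \; col \; [\Gamma]}$. Second, since by hypothesis both $\Gamma \; ctx$ and $\varphi \; prop \; [\Gamma]$ are already derivable in \emtt, Proposition \ref{cons} yields $\sem{\Gamma \; ctx} \equiv \eqclass{\Gamma \; ctx}$ and $\sem{\varphi \; prop \; [\Gamma]} \equiv \eqclass{\varphi \; prop \; [\Gamma]}$, so that $\varphi \; prop \; [\Gamma]$ itself is a legitimate representative of the latter equivalence class.

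Finally, I would unfold the defining clause of the interpretation for the canonical true term with the choice $\varphi' :\equiv \varphi$ and $\Gamma' :\equiv \Gamma$: this immediately shows that $\mathsf{true} \in \varphi \; [\Gamma]$ is a representative of the equivalence class $\sem{\mathsf{true} \in \varphi \; [\Gamma]}$, and hence is derivable in \emtt\ as required. No genuine obstacle arises here, because the real work has been carried out in setting up the canonical-isomorphism machinery and proving validity; conservativity then falls out by direct combination of these two results, and in fact one could observe that the same argument, applied to an arbitrary term judgement $a \in A \; [\Gamma]$ in place of $\mathsf{true} \in \varphi \; [\Gamma]$, would yield full conservativity of $\mathbf{emTT}+\mathsf{propext}$ over \emtt\ on judgements formulated in the language of \emtt.
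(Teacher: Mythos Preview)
Your proposal is correct and follows essentially the same approach as the paper: invoke Proposition \ref{cons} to identify $\sem{\varphi \; prop \; [\Gamma]}$ with $\eqclass{\varphi \; prop \; [\Gamma]}$, and then apply point 3 of the Validity Theorem \ref{validitypropext} to conclude. Your write-up simply makes explicit the unfolding of the interpretation clause for $\mathsf{true}$ and adds the (correct) remark that the argument generalises to arbitrary term judgements.
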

\begin{proof}
By point 2 of Proposition \ref{cons}, $\sem{\varphi \; prop \; [\Gamma]} \equiv \eqclass{\varphi \; prop \; [\Gamma]}$; then, we conclude by point 3 of Theorem \ref{validitypropext}.
\end{proof}

\begin{corollary}[Equiconsistency]\label{equi}
The theories $\mathbf{mTT}$ and $\mathbf{emTT}$ are equiconsistent.
\end{corollary}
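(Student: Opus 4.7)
The plan is to assemble equiconsistency from two interpretations going in opposite directions, both of which are already in hand. The first direction, namely that consistency of $\mathbf{mTT}$ implies consistency of $\mathbf{emTT}$, follows from the quotient model of setoids constructed in \cite{m09}, which yields an interpretation of $\mathbf{emTT}$ into $\mathbf{mTT}$: any derivation of $\bot$ in $\mathbf{emTT}$ would translate to a derivation of $\bot$ in $\mathbf{mTT}$, contradicting its consistency. So this direction is an immediate invocation of prior work.

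For the converse direction, namely that consistency of $\mathbf{emTT}$ implies consistency of $\mathbf{mTT}$, I would chain the two interpretations developed in the present section. Proposition \ref{eqc1} gives a syntactic interpretation of $\mathbf{mTT}$ into $\mathbf{emTT}+\mathsf{propext}$, while Theorem \ref{validitypropext} together with Corollary \ref{consprop} shows that $\mathbf{emTT}+\mathsf{propext}$ is propositionally conservative over $\mathbf{emTT}$. Explicitly: if $\mathbf{mTT} \vdash \bot$, then by Proposition \ref{eqc1} we obtain $\mathbf{emTT}+\mathsf{propext} \vdash \bot$ (the constructor $\bot$ is untouched by the renaming, since it is primitive in both theories and is not among the proof-term constructors collapsed to $\mathsf{true}$), and then Corollary \ref{consprop} applied to $\varphi :\equiv \bot$ in the empty context delivers $\mathbf{emTT} \vdash \bot$, contradicting the assumed consistency of $\mathbf{emTT}$.

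There is essentially no obstacle: the result is a packaging of the preceding propositions and corollary. The only points that warrant a sentence of verification are that the translations in both directions preserve $\bot$ (immediate from their definitions on pre-syntax) and that Corollary \ref{consprop} indeed applies, which it does since $\bot$ is a proposition of $\mathbf{emTT}$ derivable in the empty context. The whole argument thus reduces to exhibiting the composition of the two interpretations, with conservativity of $\mathbf{emTT}+\mathsf{propext}$ over $\mathbf{emTT}$ as the non-trivial ingredient supplied earlier.
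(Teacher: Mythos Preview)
Your proposal is correct and follows exactly the paper's approach: one direction by the quotient-model interpretation of \cite{m09}, the other by composing Proposition~\ref{eqc1} with the conservativity result of Corollary~\ref{consprop}. The extra verification you spell out (that $\bot$ is preserved and that Corollary~\ref{consprop} applies in the empty context) is sound and simply makes explicit what the paper leaves implicit.
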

\begin{proof}
From \cite{m09}, we know that the consistency of \mtt\ implies that of \emtt. For the other direction, 
we know that \mtt\ can be interpreted in  $\mathbf{emTT}+\mathsf{propext}$ by 
 Proposition \ref{eqc1}; in turn, $\mathbf{emTT}+\mathsf{propext}$ is equiconsistent to its fragment \emtt\ by Corollary \ref{consprop}.
\end{proof}

\section{Equiconsistency of \mf\ with its classical version}\label{eqclass}
In this section, we adapt the Gödel-Gentzen's double-negation translation of classical predicative logic into the intuitionistic one in \cite{Troelstra} to interpret the classical version \emttc\ of the  extensional level \emtt\ of \mf\ into \emtt\ itself. More in details, 
\begin{definition}
\emttc\ is  the extension of \emtt\ with the following rule formalising the Law of Excluded Middle.
\[
\mathsf{LEM}\;\frac
{\varphi \; prop}
{\varphi \vee \neg \varphi \; \mathsf{true}}
\]
\end{definition}

The underlying idea of the translation is straightforward: we want to keep translating propositions of \emttc\ into stable propositions  $\varphi$ of \emtt\  (i.e. those equivalent to their the double negation)  while leaving unaltered set-theoretical constructors that do not involve logic. Formally, a proposition $\varphi \; prop$ of \emtt\  is said to be \textit{stable} if the judgement $\neg\neg \varphi \Rightarrow \varphi \; true$ is derivable. Accordingly, a collection $A \; col$ is said to have \textit{stable equality} if its propositional equality $\mathsf{Eq}(A,x,y) \; prop \; [x,y \in A]$ is stable in \emtt.

Since \emtt\  is a dependent type system in which sorts can depend on terms and propositions, the translation will be defined on all those entities, and not just on formulas.

\begin{definition}[Translation of \emttc\ into \emtt]
\label{thedntranslation}
We define by simultaneous recursion four endofunctions $(-)^{\mathcal{N}}$ on pre-contexts, pre-types, pre-propositions, and pre-terms, respectively.

\textit{Variables and contexts.} The translation does not affect variables, and on contexts it is defined in the obvious way.
\[
x^{\mathcal{N}} :\equiv x \qquad
()^{\mathcal{N}} :\equiv () \qquad
(\Gamma, x \in A)^{\mathcal{N}} :\equiv \Gamma^{\mathcal{N}},\, x \in A^{\mathcal{N}}
\]

\textit{Logic.} We translate the connectives as in the case of predicate logic, but in the case of quantifiers the translation is recursively applied also to the domain of quantification. Contrary to the case of predicate logic, we do not double-negate the propositional equality, and we recursively applied the translation also to its type and terms; in the validity theorem, the burden of proving that it is stable is transferred to the translation of types. Finally, the $\mathsf{true}$ term is mapped to itself.
\begin{equation*}
\begin{aligned}[c]
\bot^{\mathcal{N}} & :\equiv \bot \\
(\varphi \wedge \psi)^{\mathcal{N}} & :\equiv \varphi^{\mathcal{N}} \wedge \psi^{\mathcal{N}} \\
(\varphi \Rightarrow \psi)^{\mathcal{N}} & :\equiv \varphi^{\mathcal{N}} \Rightarrow \psi^{\mathcal{N}} \\
(\varphi \vee \psi)^{\mathcal{N}} & :\equiv \neg\neg(\varphi^{\mathcal{N}} \vee \psi^{\mathcal{N}})
\end{aligned}
\qquad\qquad
\begin{aligned}[c]
((\exists x \in A)\varphi)^{\mathcal{N}} & :\equiv \neg\neg(\exists x \in A^{\mathcal{N}})\varphi^{\mathcal{N}} \\
((\forall x \in A)\varphi)^{\mathcal{N}} & :\equiv (\forall x \in A^{\mathcal{N}})\varphi^{\mathcal{N}} \\
\mathsf{Eq}(A,a,b)^{\mathcal{N}} & :\equiv
\mathsf{Eq}(A^\mathcal{N},a^{\mathcal{N}},b^{\mathcal{N}}) \\
\mathsf{true}^{\mathcal{N}} & :\equiv \mathsf{true}
\end{aligned}
\end{equation*}

\textit{Set constructors.} Since we do not want to alter set-theoretic constructions, we just recursively apply the translation to their sub-expressions. We report here the cases of the empty set and dependent sums; the same (trivial) pattern will apply also to the pre-syntax of singleton set, disjoint sums, dependent products, lists and quotients.
\begin{equation*}
\begin{aligned}[c]
\mathsf{N_0}^{\mathcal{N}} & :\equiv \mathsf{N_0}\\
\mathsf{El}_{\mathsf{N_0}}(c)^{\mathcal{N}} & :\equiv \mathsf{El}_{\mathsf{N_0}}(c^{\mathcal{N}}) \\
&
\end{aligned}
\qquad\qquad
\begin{aligned}[c]
((\Sigma x \in A)B)^{\mathcal{N}} & :\equiv (\Sigma x \in A^{\mathcal{N}})B^{\mathcal{N}} \\
\langle a, b \rangle^{\mathcal{N}} & :\equiv \langle a^{\mathcal{N}}, b^{\mathcal{N}} \rangle \\
\mathsf{El}_{\Sigma}(d,(x,y).m)^{\mathcal{N}} & :\equiv \mathsf{El}_{\Sigma}(d^{\mathcal{N}},(x,y).m^{\mathcal{N}})
\end{aligned}
\end{equation*}

\textit{Power collection of the singleton.} We translate the power collection of the singleton into its subcollection of stable propositions (up to equiprovability); the translation of its introduction constructor just accounts for this change.
\[
\mathcal{P}(1)^{\mathcal{N}} :\equiv (\Sigma\, x \in \mathcal{P}(1))(\neg\neg\ext(x) \Rightarrow \ext(x)) \qquad [\varphi]^{\mathcal{N}} :\equiv \langle [\varphi^{\mathcal{N}}] , \mathsf{true} \rangle
\]
\end{definition}
This concludes the definition of the translation. We immediately notice that it enjoys the following syntactical property, which we will tacitly exploit in the rest of the discussion.
\begin{lemma}[Substitution]
If $e$ and $t$ are two expressions of the pre-syntax, and $x$ is a variable, then $(e[t/x])^{\mathcal{N}} \equiv e^{\mathcal{N}}[t^{\mathcal{N}}/x]$.
\end{lemma}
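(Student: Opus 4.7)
The plan is to proceed by straightforward structural induction on the pre-expression $e$, leveraging the fact that the translation $(-)^{\mathcal{N}}$ is defined compositionally: the translation of a compound expression is built from the translations of its immediate subexpressions, with at most an outer wrapping by $\neg\neg$ or a $\Sigma$-type. Since ordinary capture-avoiding substitution commutes with every pre-syntactic constructor, it will commute with the translation provided the induction hypothesis applies to each subexpression.

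For the base cases I would treat variables directly: if $e \equiv x$, then $(e[t/x])^{\mathcal{N}} \equiv t^{\mathcal{N}} \equiv x^{\mathcal{N}}[t^{\mathcal{N}}/x] \equiv e^{\mathcal{N}}[t^{\mathcal{N}}/x]$, and if $e \equiv y \not\equiv x$ both sides reduce to $y$. For nullary constants such as $\bot$, $\mathsf{N_0}$, $\mathsf{N_1}$, $\mathsf{true}$, the translation produces a closed pre-expression, so substitution acts as the identity on both sides. For the compound set-theoretic constructors and for the Boolean connectives that the translation leaves structurally unaltered (namely $\wedge$, $\Rightarrow$, $\forall$, $\mathsf{Eq}$), the inductive step is immediate: substitution distributes over the constructor, and the induction hypothesis applied to each subexpression yields the desired equality.

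The slightly less trivial cases are those in which the translation introduces additional syntax, namely disjunction, existential quantification, and the power collection of the singleton. For instance, $((\varphi \vee \psi)[t/x])^{\mathcal{N}} = (\varphi[t/x] \vee \psi[t/x])^{\mathcal{N}} = \neg\neg((\varphi[t/x])^{\mathcal{N}} \vee (\psi[t/x])^{\mathcal{N}})$, which by two applications of the induction hypothesis equals $\neg\neg(\varphi^{\mathcal{N}}[t^{\mathcal{N}}/x] \vee \psi^{\mathcal{N}}[t^{\mathcal{N}}/x]) \equiv (\neg\neg(\varphi^{\mathcal{N}} \vee \psi^{\mathcal{N}}))[t^{\mathcal{N}}/x] \equiv (\varphi \vee \psi)^{\mathcal{N}}[t^{\mathcal{N}}/x]$; the existential case is analogous. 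The constant $\mathcal{P}(1)^{\mathcal{N}}$ expands to an expression that binds a fresh variable and is therefore unaffected by substitution on both sides.

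The main obstacle, if one wants to be punctilious, is the usual bureaucracy of bound variables in the quantifier, lambda-abstraction, eliminator, and $\mathcal{P}(1)^{\mathcal{N}}$ cases: the bound variable introduced by the translation must be chosen fresh with respect to $t^{\mathcal{N}}$ and distinct from $x$. I would dispense with this by adopting the standard Barendregt convention already implicit in the paper's treatment of pre-syntax, so that capture-avoiding substitution genuinely commutes with every binder. Under this convention the induction goes through uniformly across all constructors.
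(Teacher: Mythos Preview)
Your proposal is correct and follows exactly the approach the paper takes: the paper's proof reads in its entirety ``Straightforward, by induction on the pre-syntax,'' and you have simply unpacked what that induction amounts to, including the routine handling of the binder cases via the Barendregt convention.
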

\begin{proof}
Straightforward, by induction on the pre-syntax.
\end{proof}

The next two propositions will be vital to prove the validity theorem. They characterise the equality of various type constructors of $\mathbf{emTT}$ and collect a series of closure properties for collections with stable equality, respectively.

\begin{proposition}\label{char}
The following equivalences hold in $\mathbf{emTT}$ (where the free variables in the left-hand side of each equivalence are implicitly assumed to be in the obvious context): 
\begin{enumerate}

\item $x =_{\mathsf{N_0}} y \Leftrightarrow \bot$

\item $x =_{\mathsf{N_1}} y \Leftrightarrow \top$

\item $l =_{\mathsf{List}(A)} l' \Leftrightarrow \begin{cases}
\top & \text{if $l = l' = \epsilon$} \\
s =_{\mathsf{List}(A)} s' \wedge a =_A a' & \text{if $l = \mathsf{cons}(s,a)$ and $l' = \mathsf{cons}(s',a')$} \\ 
\bot & \text{otherwise}
\end{cases}$

where the proposition defined by cases on the right side can be formally defined using the elimination of lists applied toward the collection ${\cal P}(1)$.\footnote{Namely as $
\ext(\mathsf{El_{List}}(l, \mathsf{El_{List}}(l', [\top], [\bot]),(x,y,z).c))$, where $l,l' \in \mathsf{List}(A)$ and $c(x,y,z) :\equiv \mathsf{El_{List}}(l',[\bot],(x',y',z').[\ext(z) \wedge y =_A y'])$.}


\item 
$z =_{A+B} z' \Leftrightarrow \begin{cases}
x =_A x' & \text{if $z = \mathsf{inl}(x)$ and $z' = \mathsf{inl}(x')$} \\
y =_B y' & \text{if $z = \mathsf{inr}(y)$ and $z' = \mathsf{inr}(y')$} \\ 
\bot & \text{otherwise}
\end{cases}$

where the proposition defined by cases on the right side is formally defined analogously as in the previous point;

\item $\langle a,b \rangle =_{(\Sigma x \in A)B(x)} \langle a',b' \rangle \Leftrightarrow (\exists x \in a =_A a')\ b =_{B(a)} b'$

\item $f =_{(\Pi x \in A)B(x)} g \Leftrightarrow (\forall x \in A)\ f(x)=_{B(x)}g(x)$

\item $[a] =_{A/R} [b] \Leftrightarrow R(a,b)$

\item  $U =_{\mathcal{P}(1)} V \Leftrightarrow (\ext(U) \Leftrightarrow \ext(V))$

\item  $p =_{\varphi} q \Leftrightarrow \top \quad$ if $\varphi \; prop$
\end{enumerate}
\end{proposition}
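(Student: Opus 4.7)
The plan is to handle each of the nine clauses by leveraging the extensional character of \emtt: the reflection of propositional equality into judgemental equality, function and $\eta$-extensionality, effective quotients, and proof-irrelevance of propositions. Since each right-hand side of an equivalence is a proposition and propositions in \emtt\ have a canonical proof-term, it suffices to exhibit mutual derivations of the truth judgements.

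The easy clauses come first. Clause 9 is immediate from proof-irrelevance (the rule \textsf{prop-mono}): any two inhabitants of $\varphi$ are judgementally equal. Clause 1 uses the empty elimination on the variables $x,y \in \mathsf{N_0}$ in the ambient context; conversely, $\bot$ entails anything. Clause 2 follows from the $\eta$-rule for $\mathsf{N_1}$, which forces $x=\star=y$ judgementally. Clause 6 is function extensionality, which is a direct consequence of $\eta$ for $\Pi$ combined with equality reflection in \emtt. Clause 7 is the effectiveness of the quotient constructor (the \textsf{eff-Q} rule of \emtt). Clause 8 combines the computation rule \textsf{C-}$\mathcal{P}(1)$ with the fact that $U$ and $V$, viewed as functions $\mathsf{N_1}\to\mathcal{P}(1)$, can be compared via their decodings; the equivalence of decodings then lifts to equality in $\mathcal{P}(1)$ through the canonical form $[\cdot]$.

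Clause 5 is proved by using the projections $\pi_1,\pi_2$ applied to the assumed equality $\langle a,b\rangle =_{(\Sigma x\in A)B(x)} \langle a',b'\rangle$: this yields $a=_A a'$, and then transporting $b$ along this proof gives $b=_{B(a)} b'$ as an equality over the appropriate fibre, so that we can form a witness of the right-hand side (the $\exists$ degenerates to a $\wedge$ because the proof of $a=_A a'$ is unique by proof-irrelevance). The converse direction assembles the equality of the pairs from the component equalities using congruence of $\langle-,-\rangle$ and substitution along $a=_A a'$.

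Clauses 3 and 4 are the main obstacle. Following the hint in the footnote, the right-hand ``proposition defined by cases'' must first be formally introduced via the elimination rules of $\mathsf{List}(A)$ and $+$ into the collection $\mathcal{P}(1)$ (using $\ext$ to extract a proposition). Once that is in place, the forward direction is proved by induction on one argument, then the other, using the computation rules for $\mathsf{El_{List}}$ and $\mathsf{El_+}$ to reduce to the leaf cases, each of which is handled by the congruence rules of the constructors $\mathsf{cons}$, $\mathsf{inl}$, $\mathsf{inr}$ and by clauses 1 and 2 in the discrepant cases. The backward direction proceeds symmetrically: in the matching-constructor branches we apply congruence, and in the mismatched branches the hypothesis $\bot$ entails the goal. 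This double induction is routine but notationally heavy; the key subtlety is that the proposition defined by cases is first-class in \emtt\ only because of the presence of $\mathcal{P}(1)$ as a collection of propositions, which is precisely why the footnote routes the definition through it.
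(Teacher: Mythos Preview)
Your proposal is correct and follows essentially the same route as the paper's (very terse) proof: invoke the judgemental equality rules and the $\eta$-rules of each constructor for the straightforward clauses, and use elimination toward $\mathcal{P}(1)$ to handle the inductive types $\mathsf{List}(A)$ and $A+B$. Two small cosmetic points: in clause~8 the terms $U,V$ are elements of the primitive collection $\mathcal{P}(1)$, not functions $\mathsf{N_1}\to\mathcal{P}(1)$, so the argument goes directly through the (derived) $\eta$-rule $U = [\ext(U)]$; and in clauses~3 and~4 the mismatched-constructor subcases are not handled ``by clauses~1 and~2'' but rather by the very $\mathcal{P}(1)$-valued case function you defined, which is exactly the ``standard trick'' the paper alludes to for establishing disjointedness of constructors.
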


\begin{proof}
Using the judgemental equality rules and the (possibly derived) $\eta$-rules of the corresponding constructors. Additionally, in the cases of lists and disjoint sums one uses induction principles together with the standard trick of eliminating toward the collection ${\cal P}(1)$ to prove the disjointedness of their term constructors.
\end{proof}
\begin{remark}
Since we are in an extensional type theory, the above proposition works smoothly especially for the clause of the dependent sum. Notice in fact that the proposition on the right-hand side of point 5 could not have been written simply as the conjunction $a =_A a' \,\wedge\, b =_{B(a)} b'$, which is ill-formed since the judgement $b' \in B(a)$ cannot be derived without having proved $a =_A a'$ first.

\end{remark}

\begin{proposition}\label{two}

In $\mathbf{emTT}$, propositions, the empty set, the singleton set, and the collection $(\Sigma\, x \in \mathcal{P}(1))(\neg\neg\ext(x) \Rightarrow \ext(x))$ have stable equality; moreover, having stable equality is preserved by taking lists, disjoint sums, dependent sums, and dependent products; finally, a set quotiented by a stable equivalence relation has stable equality.
\end{proposition}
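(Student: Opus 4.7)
The plan is to use Proposition \ref{char} to reduce the stability of $x =_A y$, for each type constructor in question, to the stability of an explicit auxiliary proposition built from simpler ingredients; stability of that auxiliary proposition will then follow from closure properties of the class of stable propositions.

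Accordingly, I would first record as a preliminary lemma the closure of stable propositions in \emtt\ under $\top$, $\bot$, $\wedge$, implications into a stable codomain, universal quantification over any collection, and hence under biconditionals whose two sides are both stable. Each is a routine intuitionistic calculation; the only one worth spelling out is $\forall$-stability, which follows because $\neg\neg(\forall x \in A)\varphi(x)$ yields $\neg\neg\varphi(x)$ for every fixed $x$, whence $\varphi(x)$ by the assumed stability of $\varphi(x)$. Note that neither disjunction nor existential quantification need be preserved.

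With this lemma in hand, most cases are immediate. Propositions, $\mathsf{N_0}$, $\mathsf{N_1}$, and quotients by stable equivalence relations follow at once from the corresponding clauses of Proposition \ref{char} (points 9, 1, 2, and 7 respectively), since the characterising propositions are $\top$, $\bot$, $\top$, and the stable $R(a,b)$. For dependent products, the characterisation in point 6 reduces $f =_{(\Pi x \in A)B(x)} g$ to $(\forall x \in A)\, f(x) =_{B(x)} g(x)$, and the conclusion follows from the $\forall$-closure together with the inductive hypothesis that each $B(x)$ has stable equality. For dependent sums, the existential appearing in point 5 is quantified over the propositional equality $a =_A a'$, so by proof-irrelevance it is equivalent to a conjunction, and we conclude via the $\wedge$-closure clause applied to the stability of the two component equalities. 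For lists and disjoint sums, I would argue by the respective elimination principle (the characterisations in points 3 and 4 being themselves defined by elimination toward $\mathcal{P}(1)$): in each constructor case the characterising proposition is either $\top$, $\bot$, or a conjunction of equalities in $A$ or in a shorter list, all of which are stable by the inductive hypothesis.

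The case I expect to require the most care, and indeed the reason the $\Sigma$-type $(\Sigma\, x \in \mathcal{P}(1))(\neg\neg\ext(x) \Rightarrow \ext(x))$ is used in the translation in Definition \ref{thedntranslation} in the first place, is precisely this collection. By points 5 and 8 of Proposition \ref{char}, equality between $\langle U, p\rangle$ and $\langle V, q\rangle$ reduces (modulo proof-irrelevance of the second component) to $\ext(U) \Leftrightarrow \ext(V)$. The decisive observation is that the second components $p$ and $q$ are themselves inhabitants of $\neg\neg\ext(U) \Rightarrow \ext(U)$ and $\neg\neg\ext(V) \Rightarrow \ext(V)$, i.e.\ witnesses that both $\ext(U)$ and $\ext(V)$ are stable; the biconditional clause of the preliminary lemma then yields the desired stability. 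No further difficulty should arise, as every remaining case is a mechanical consequence of Proposition \ref{char} combined with the closure lemma.
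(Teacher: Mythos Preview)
Your approach is the paper's: reduce each case via Proposition~\ref{char} and then invoke closure of stable propositions under $\top$, $\bot$, $\wedge$, $\Rightarrow$, $\forall$. The paper does not isolate your preliminary lemma but uses the same facts inline, and its treatment of the collection $(\Sigma x\in\mathcal{P}(1))(\neg\neg\ext(x)\Rightarrow\ext(x))$ is identical to yours.

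The one step that does not go through as written is the dependent sum. You say the existential in point~5, being over the proposition $a=_A a'$, ``is equivalent to a conjunction'' by proof-irrelevance. But the intended conjunction $a=_A a' \wedge b=_{B(a)}b'$ is \emph{ill-formed}: the judgement $b'\in B(a)$ cannot be derived without first establishing $a=_A a'$, so the second conjunct cannot even be typed in the ambient context. The paper flags exactly this point in the Remark after Proposition~\ref{char}, and this is why it spells out the $\Sigma$-case rather than dismissing it: from $(\exists x\in a=_A a')\,b=_{B(a)}b'$ one extracts $a=_A a'$ and, under that hypothesis, $b=_{B(a)}b'$; double-negating these two implications and using the stability of each component equality then yields stability of the existential. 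Your idea can be rescued along the same lines by rewriting the existential as the well-formed proposition $a=_A a' \,\wedge\, (\forall x\in a=_A a')\,b=_{B(a)}b'$, to which your $\wedge$- and $\forall$-closure clauses genuinely apply; but as stated, ``equivalent to a conjunction, apply $\wedge$-closure'' skips over a real well-formedness obstacle.
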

\begin{proof}
All cases are proved using Proposition \ref{char}. We spell out only the most interesting ones.

For the case of dependent sum, assume $A \;col$ and $B(x) \; col \; [x \in A]$ to be two collections with stable equality. By Proposition \ref{char}, we must prove that for terms $a,a' \in A$, $b \in B(a)$, and $b' \in B(a')$ the proposition $(\exists x \in a =_A a') \ b =_{B(a)} b' $ is stable. By the elimination rule of the existential quantifier, we can derive the followings.
\begin{align*}
& (\exists x \in a =_A a')\;b =_{B(a)} b' \quad \Rightarrow \quad a =_A a' \\
& (\forall y \in a =_A a')(\ (\exists x \in a =_A a') \; b =_{B(a)} b' \quad \Rightarrow \quad b =_{B(a)} b' \;)
\end{align*}

From these, we get
\begin{align}
& \neg\neg(\exists x \in a =_A a')\; b =_{B(a)} b' \quad \Rightarrow \quad  \neg\neg a =_A a'\\
& (\forall y \in a =_A a')(\;\neg\neg (\exists x \in a =_A a')\; b =_{B(a)} b' \quad \Rightarrow \quad\neg\neg  b =_{B(a)} b'\;)
\end{align}
Assume $\neg\neg (\exists x \in a =_A a')\; b =_{B(a)} b'$; from (2) we deduce $\neg\neg a =_A a'$
and, since $A$ has stable equality, we conclude $a =_A a'$; knowing that $a =_A a'$ holds, we can now apply the hypothesis to (3) and we deduce
$\neg\neg b =_{B(a)} b'$, which, since $B(a)$ has stable equality for any given $a$, implies $b =_{B(a)} b'$; finally, by the introduction rule of the existential quantifier we have $(\exists x \in a =_A a')\; b =_{B(a)} b'$. Hence, we have shown that the proposition $(\exists x \in a =_A a')\; b =_{B(a)} b'$ is stable.
\\

For the collection $(\Sigma x \in \mathcal{P}(1))(\neg\neg\ext(x)\Rightarrow \ext(x))$ we have that, by the rules for equality of dependent pairs and propositions in Proposition \ref{char}, its propositional equality is equivalent to \[ \pi_{\mathsf{1}}(z) =_{\mathcal{P}(1)} \pi_{\mathsf{1}}(w) \quad \text{with } z,w \in (\Sigma x \in \mathcal{P}(1))(\neg\neg\ext(x)\Rightarrow \ext(x))\] which, again by the case of ${\cal P}(1)$ in Proposition \ref{char}, is in turn equivalent to \[ \ext(\pi_1(z)) \Leftrightarrow \ext(\pi_1(w))\]
Since the propositions $\ext(\pi_1(z))$ and $\ext(\pi_1(w))$ are stable (by $\pi_2(z)$ and $\pi_2(w)$, respectively), and since conjunction and implication preserve stability, we conclude that $(\Sigma x \in \mathcal{P}(1))(\neg\neg\ext(x)\Rightarrow \ext(x))$ has stable equality.
\end{proof}
    
We are now ready to prove the validity of the interpretation.

\begin{theorem}[Validity]\label{dnvalidity}
The translation is an interpretation of \emttc\ into \emtt, in the sense that it preserves judgement derivability between the two theories:
\begin{enumerate}
\item
if \emttc\ $\vdash \Gamma \; ctx$, then $\mathbf{emTT} \vdash \Gamma^{\mathcal{N}} \; ctx$

\item
if \emttc\ $\vdash A \; type \; [\Gamma]$, then $\mathbf{emTT} \vdash A^{\mathcal{N}} \; type \; [\Gamma^{\mathcal{N}}]$

\item
if \emttc\ $\vdash a \in A \; [\Gamma]$, then $\mathbf{emTT} \vdash a^{\mathcal{N}} \in A^{\mathcal{N}} \; [\Gamma^{\mathcal{N}}]$

\item
if \emttc\ $\vdash A = B \; type \; [\Gamma]$, then $\mathbf{emTT} \vdash A^{\mathcal{N}} = B^{\mathcal{N}} \; type \; [\Gamma^{\mathcal{N}}]$

\item
if \emttc\ $\vdash a = b \in A \; [\Gamma]$, then $\mathbf{emTT} \vdash a^{\mathcal{N}} = b^{\mathcal{N}} \in A^{\mathcal{N}} \; [\Gamma^{\mathcal{N}}]$
\end{enumerate}
Moreover, the translation produces stable propositions and, in particular, collections with stable equality:
\begin{enumerate}\setcounter{enumi}{5}
\item
if \emttc\ $\vdash \varphi \; prop \; [\Gamma]$, then \[\mathbf{emTT} \vdash \neg\neg\varphi^{\mathcal{N}} \Rightarrow \varphi^{\mathcal{N}} \; \mathsf{true} \; [\Gamma^{\mathcal{N}}]\]
\item
if \emttc\ $\vdash A \; col \; [\Gamma]$, then \[\mathbf{emTT} \vdash \neg\neg\mathsf{Eq}(A^{\mathcal{N}}, x, y) \Rightarrow  \mathsf{Eq}(A^{\mathcal{N}}, x, y) \; \mathsf{true} \; [\Gamma^{\mathcal{N}}, x \in A^{\mathcal{N}}, y \in A^{\mathcal{N}}]\]
\end{enumerate}
\end{theorem}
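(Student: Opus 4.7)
The plan is to prove all seven statements by a single simultaneous induction on the derivations of \emttc. As each inference rule of the source theory is processed, I verify that the translated judgement (points 1--5) is derivable in \emtt, and that stability (points 6--7) holds whenever a proposition or a collection is introduced. The two external ingredients are Proposition \ref{char}, which characterises propositional equality at each type constructor, and Proposition \ref{two}, which packages the stability of equality into the closure properties I will need.

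For the logical connectives and quantifiers, the translation follows the standard Gödel-Gentzen pattern, inserting $\neg\neg$ in front of $\vee$ and $\exists$ only; stability of the translated formulas (point 6) then reduces to the elementary intuitionistic facts that $\neg\neg$ commutes with $\wedge$, $\Rightarrow$, and $\forall$, and that $\neg\neg\neg\neg P \Leftrightarrow \neg\neg P$. The $\mathsf{LEM}$ rule is validated because $\neg\neg(\varphi^{\mathcal{N}} \vee \neg\varphi^{\mathcal{N}})$ is an intuitionistic theorem. Crucially, the propositional equality predicate is deliberately not double-negated; its stability, needed for point 6, is instead inherited from point 7 applied to the underlying collection, which is precisely why points 6 and 7 must be proved jointly with points 1--5.

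For the set-theoretic constructors $\mathsf{N_0}$, $\mathsf{N_1}$, $+$, $\Sigma$, $\Pi$, $\mathsf{List}$, and the quotient, the translation is homomorphic, so points 1--5 reduce immediately to the corresponding rules of \emtt. Point 7 is exactly the content of Proposition \ref{two}: combining Proposition \ref{char} to characterise each equality type with the closure properties of Proposition \ref{two} yields stability. The quotient case additionally requires that $R^{\mathcal{N}}$ be a stable small equivalence relation, which I obtain from the induction hypothesis (points 3--5 for the equivalence axioms and point 6 for stability), feeding into the final clause of Proposition \ref{two}.

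The delicate case is $\mathcal{P}(1)$, whose translation is the subcollection $(\Sigma\, x \in \mathcal{P}(1))(\neg\neg\ext(x) \Rightarrow \ext(x))$ of stable propositions. The introduction rule demands that $[\varphi]^{\mathcal{N}} \equiv \langle[\varphi^{\mathcal{N}}], \mathsf{true}\rangle$ be well-typed, which reduces via the computation rule $\textsf{C-}\mathcal{P}(1)$ of the target theory to the stability of $\varphi^{\mathcal{N}}$, supplied by point 6 on $\varphi$. Preservation of $\textsf{C-}\mathcal{P}(1)$ itself is obtained by chasing through Proposition \ref{char} on the sigma-subcollection: equality there reduces, using $\eta$ and the triviality of equality in propositions, to equality in $\mathcal{P}(1)$, which in turn reduces to equiprovability. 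Stability of the equality of $\mathcal{P}(1)^{\mathcal{N}}$ is the clause of Proposition \ref{two} devoted precisely to this subcollection. I expect the bookkeeping around $\mathcal{P}(1)$---ensuring that at every point where a stable proposition or a collection with stable equality is demanded, the induction hypothesis has already provided it---to be the main technical hurdle, because this is where the interaction between the propositional and collection layers of \emtt\ is most subtle.
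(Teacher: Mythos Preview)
Your proposal is correct and follows essentially the same approach as the paper: a simultaneous induction on derivations for all seven points, with Proposition~\ref{two} handling point~7, the standard G\"odel--Gentzen argument for the logical connectives and $\mathsf{LEM}$, and the crucial observation that point~6 for $\mathsf{Eq}$ is discharged via the inductive hypothesis on point~7. You spell out more detail than the paper's terse proof (in particular around $\mathcal{P}(1)$ and quotients), but the structure and key ideas coincide.
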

\begin{proof}
All seven points are proved simultaneously by induction on judgements derivation. Due to the trivial pattern of the translation on most of the constructors, the majority of cases are trivially checked. For point 7, it suffices to apply the inductive hypotheses using Proposition \ref{two}. The cases involving the axiom of the excluded middle, the falsum constant, the disjunction and the existential quantifier are checked as in the case of translating classical predicate logic into the intuitionistic one; in the case of propositional equality, point 6 is checked using the inductive hypothesis on point 7.
\end{proof}

\begin{corollary}\label{equiconsistency}
The theories \emttc\ and $\mathbf{emTT}$ are equiconsistent.
\end{corollary}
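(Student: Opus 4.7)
The plan is to derive equiconsistency from the two easy implications, one of which is trivial and the other of which follows immediately from the Validity Theorem \ref{dnvalidity}.

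First, I would note the forward direction: if \emttc\ is consistent, then so is \emtt, since \emttc\ is by definition an extension of \emtt\ (it only adds the rule $\mathsf{LEM}$), so any derivation of $\bot\ \mathsf{true}$ in \emtt\ is also a derivation in \emttc. Hence the only content lies in the converse.

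For the converse, I would argue contrapositively. Suppose \emttc\ is inconsistent, i.e.\ there exists a closed term $p$ such that \emttc\ $\vdash p \in \bot$. Applying point 3 of the Validity Theorem \ref{dnvalidity} (with empty context, whose translation is again empty by Definition \ref{thedntranslation}), we obtain $\mathbf{emTT} \vdash p^{\mathcal{N}} \in \bot^{\mathcal{N}}$. But by definition of the translation $\bot^{\mathcal{N}} \equiv \bot$, so \emtt\ derives $p^{\mathcal{N}} \in \bot$, witnessing the inconsistency of \emtt. This establishes the converse and completes the proof.

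There is no real obstacle: all the substantive work has already been done in Theorem \ref{dnvalidity}, and the corollary is just the observation that the translation fixes $\bot$ and the empty context, so inconsistency is transported backwards along the interpretation. The only thing to double-check is that $\bot$ is indeed sent to $\bot$ (it is, by the first clause in the logic part of Definition \ref{thedntranslation}) and that the empty context is translated to itself (it is, by the second clause in the contexts part of Definition \ref{thedntranslation}).
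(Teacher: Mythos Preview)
Your proof is correct and follows essentially the same approach as the paper: both argue that the inconsistency judgement is fixed by the double-negation translation (since $\bot^{\mathcal{N}} \equiv \bot$, $\mathsf{true}^{\mathcal{N}} \equiv \mathsf{true}$, and $()^{\mathcal{N}} \equiv ()$), and then invoke point 3 of the Validity Theorem~\ref{dnvalidity}. Your version is slightly more explicit in spelling out both directions and the contrapositive structure, but the mathematical content is identical.
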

\begin{proof}
By point 3 of Theorem \ref{validitypropext}, since the inconsistency judgement $\mathsf{true} \in \bot \; [\,]$ is sent by the translation to itself.
\end{proof}


\begin{remark}
Observe that the above proofs go well within the extensional type theory. Interpreting \emttc\ directly into the intensional level \mtt\ would have been more complicated whilst possible with the use of canonical isomorphisms.
\end{remark}

\begin{remark}
Among theories that exploit dependent types, the double-negation translation has been applied also to the Calculus of Constructions in \cite{CoCprooftheory}, and to logic-enriched type theories in \cite{luo}.
The first result will be extended in Section \ref{impmf} by considering an extension of the base calculus with inductive types.
In contrast, the second calculus is closer to a multi-sorted logic, where propositions are not types, and there is no comprehension of a type with a proposition.
Our result shows that the double-negation translation goes through when logic is part of the type theory, mainly because of comprehensions, quotients, and equality reflection.
\end{remark}

\section{Compatibility of \mf\ with classical predicativism à la Weyl}
As an application of the equiconsistency between $\mathbf{emTT}$ and \emttc, in this subsection we deduce that, accordingly to Weyl's treatment of classical predicative mathematics \cite{DasKontinuum}, neither Dedekind real numbers nor number-theoretic functional relations form a set.

We start by observing that, although classical, in \emttc\ the type of booleans $\mathsf{Bool}:\equiv \mathsf{N_1}+\mathsf{N_1}$ is not a propositional classifier; this is because, even in the presence of the excluded middle, we cannot eliminate from a disjunction $\varphi \vee \neg \varphi$ towards the set $\mathsf{N_1}+\mathsf{N_1}$. More generally, we have the following result.

\begin{proposition}\label{pn1}
In \emttc, and thus also in $\mathbf{emTT}$, the power collection of the singleton $\mathcal{P}(1)$ is not isomorphic to any set.
\end{proposition}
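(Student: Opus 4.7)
The plan is by contradiction. Assume an isomorphism $\mathcal{P}(1)\cong S$ for some set $S$, with functional terms $f\colon\mathcal{P}(1)\to S$ and $g\colon S\to\mathcal{P}(1)$ inverse to each other. I will use it to construct a function $\mathcal{P}(1)\to\mathsf{Bool}$ exhibiting $\mathsf{Bool}$ as a propositional classifier, contradicting the observation stated just before the statement.

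First, elimination of the disjoint sum gives the canonical $h\colon\mathsf{Bool}\to\mathcal{P}(1)$ with $h(\mathsf{inl}(\star)):=[\top]$ and $h(\mathsf{inr}(\star)):=[\bot]$. Clause $8$ of Proposition~\ref{char}, combined with the computation rule $\textsf{C-}\mathcal{P}(1)$, gives $[\top]\neq_{\mathcal{P}(1)}[\bot]$, so that $h$ is injective; while applying $\mathsf{LEM}$ to $\ext(U)$ for $U\in\mathcal{P}(1)$ shows that every such $U$ is equal to $[\top]$ or $[\bot]$, so that $h$ is propositionally surjective. The composite $f\circ h\colon\mathsf{Bool}\to S$ then inherits both properties.

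The crucial step is to lift this propositional bijection between two sets into a genuine set-level inverse $\epsilon\colon S\to\mathsf{Bool}$. I would appeal to the axiom of unique choice, available in \emtt\ through the combination of extensional equality, power collections, and quotient sets: the relation $f(h(b))=_{S}s$ is functional from $S$ to $\mathsf{Bool}$ by the two properties just established, and hence defines a function $\epsilon$ satisfying $f(h(\epsilon(s)))=_{S}s$. The composite $\mathsf{dec}:=\epsilon\circ f\colon\mathcal{P}(1)\to\mathsf{Bool}$ sends $[\top]\mapsto\mathsf{inl}(\star)$ and $[\bot]\mapsto\mathsf{inr}(\star)$, so that for every small proposition $\varphi$ we get $\mathsf{dec}([\varphi])=\mathsf{inl}(\star)$ if and only if $\varphi\;\mathsf{true}$, exhibiting $\mathsf{Bool}$ as a propositional classifier and closing the argument.

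The main obstacle is the appeal to unique choice in the inversion step: one cannot shortcut it by naive case analysis on the classical disjunction $\ext(g(s))\vee\neg\ext(g(s))$, because this would itself be the forbidden elimination of a disjunction into the set $\mathsf{Bool}$. The argument must therefore exploit the unique description provided by the hypothetical isomorphism rather than only the raw strength of $\mathsf{LEM}$. Once the contradiction is obtained for \emttc, the same conclusion in \emtt\ follows immediately because any isomorphism derivable in \emtt\ is a fortiori derivable in \emttc.
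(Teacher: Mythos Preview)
The proposal has a genuine gap at its central step. You claim unique choice is ``available in \emtt\ through the combination of extensional equality, power collections, and quotient sets'', but this is false, and indeed its failure is one of the characteristic features of the Minimalist Foundation. In \emtt\ the existential quantifier is a primitive proof-irrelevant proposition with no elimination into sets; knowing $(\exists! b\in\mathsf{Bool})\,f(h(b))=_{S}s$ does not yield a term of type $\mathsf{Bool}$. The usual topos-style derivation of unique choice needs the power object to live at the same level as the sets involved, whereas here $\mathcal{P}(B)$ is only a collection; and the quotient eliminator requires as input a function already defined on the base set, which is precisely what you are trying to produce. Your own last paragraph correctly diagnoses that naive case analysis on $\ext(g(s))\vee\neg\ext(g(s))$ is the forbidden elimination of a disjunction into a set; the ``unique description'' detour you propose is blocked for the same underlying reason.

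There is also a structural problem with the intended contradiction. Even granting the construction of $\mathsf{dec}\colon\mathcal{P}(1)\to\mathsf{Bool}$, this does not yield an inconsistency: classical systems with a two-element propositional classifier are perfectly consistent. The ``observation'' preceding the proposition is a meta-level remark about what plain \emttc\ can derive, not an internal theorem that could be contradicted; using it as the endpoint of your reductio is essentially circular, since the proposition itself is what gives that observation its content. The paper's argument is instead external and proof-theoretic: if $\mathcal{P}(1)$ were isomorphic to a set then every collection would be, in particular $\mathcal{P}(\mathbb{N})$, so \emttc\ would interpret full second-order arithmetic; this contradicts the bound of the strength of \emtt\ by $\widehat{\mathbf{ID}}_1$ from \cite{IMMS} together with the equiconsistency of \emtt\ and \emttc\ established in Corollary~\ref{equiconsistency}.
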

\begin{proof}
If $\mathcal{P}(1)$ were isomorphic to a set, then each collection of \emttc\ would be, and in particular, $\mathcal{P}(\mathbb{N})$. Thus, we could interpret full second-order arithmetic in \emttc; but this is a contradiction since, by Corollary \ref{equiconsistency}, we know that the proof-theoretic strength of \emttc\ coincide with that of $\mathbf{emTT}$, which, in turn, is bounded by $\widehat{\mathbf{ID}}_1$ as shown in \cite{IMMS}.
\end{proof}

In $\mathbf{emTT}$, the collection of Dedekind real numbers can be defined from the collection of subsets of rational numbers $\mathcal{P}(\mathbb{Q})$ through Dedekind  (left) cuts as
\begin{align*}
\mathbb{R} :\equiv (\Sigma A \in \mathcal{P}(\mathbb{Q}))( & (\exists q \in \mathbb{Q})q\,\varepsilon\,A  \\
& \wedge (\exists q \in \mathbb{Q})\neg q\,\varepsilon\,A \\
& \wedge (\forall q \,\varepsilon\, A)(\forall r \in \mathbb{Q})(r < q \Rightarrow r\,\varepsilon\,A) \\
& \wedge (\forall q \,\varepsilon\, A)(\exists r \,\varepsilon\, A)q < r)
\end{align*}
analogously, the collection of number-theoretic functional relations can be constructed from $\mathcal{P}(\mathbb{N} \times \mathbb{N})$ as
\[
\mathsf{FunRel}(\mathbb{N},\mathbb{N}) :\equiv (\Sigma R \in \mathcal{P}(\mathbb{N}\times \mathbb{N}))(\forall x \in \mathbb{N})(\exists ! y \in \mathbb{N})R(\langle x,y \rangle)
\]
The following shows that both $\mathbb{R}$ and $\mathsf{FunRel}(\mathbb{N},\mathbb{N})$ are \textit{proper} collections.

\begin{theorem}\label{rnotaset}
In \emttc, and thus also in $\mathbf{emTT}$, the collections $\mathbb{R}$ and $\mathsf{FunRel}(\mathbb{N},\mathbb{N})$ are not isomorphic to any set.
\end{theorem}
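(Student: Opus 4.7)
The plan is to reduce the statement to (the proof of) Proposition \ref{pn1}, which in fact establishes the stronger fact that $\mathcal{P}(\mathbb{N})$ itself is not isomorphic to any set in \emttc: for if it were, full second-order arithmetic would be interpretable, contradicting the proof-theoretic bound $\widehat{\mathbf{ID}}_1$ from \cite{IMMS}. To leverage this, I will exhibit $\mathcal{P}(\mathbb{N})$ as a retract of both $\mathbb{R}$ and $\mathsf{FunRel}(\mathbb{N},\mathbb{N})$. This suffices because a retract of a set is again isomorphic to a set: given $i : C \to S$ and $r : S \to C$ with $r \circ i = \mathrm{id}_C$ and $S$ a set, $C$ is isomorphic to the subset $\{s \in S : i(r(s)) =_S s\}$, which is a set by comprehension since equality of elements of a set is a small proposition.

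For $\mathsf{FunRel}(\mathbb{N},\mathbb{N})$ the retraction is essentially tautological, and $\mathsf{LEM}$ enters only through the section. Given $U \in \mathcal{P}(\mathbb{N})$, send it to the ``characteristic functional relation''
\[
R_U \langle x,y \rangle \; :\equiv \; (y =_\mathbb{N} 0 \wedge \neg\, x\,\varepsilon\, U)\; \vee \;(y =_\mathbb{N} 1 \wedge x\,\varepsilon\, U),
\]
whose single-valuedness is immediate and whose totality is witnessed by $\mathsf{LEM}$ applied to the small proposition $x\,\varepsilon\, U$. The retraction maps $R \in \mathsf{FunRel}(\mathbb{N},\mathbb{N})$ to $\{n \in \mathbb{N} : \ext(R\langle n, 1\rangle)\}$, and composition with the section recovers $U$ on the nose.

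For $\mathbb{R}$ I use the classical Cantor-style base-$3$ embedding. Send $U \in \mathcal{P}(\mathbb{N})$ to the Dedekind left cut $\iota(U)$ associated with the convergent series $\sum_{n \in U} 2 \cdot 3^{-n-1}$, formally defined as the downward closure (in $\mathbb{Q}$) of the rational partial sums along initial segments of $U$, together with the standard verifications that the resulting subset of $\mathbb{Q}$ is indeed a Dedekind cut. Since only the base-$3$ digits $0$ and $2$ occur, representations are unambiguous, and the retraction $\rho : \mathbb{R} \to \mathcal{P}(\mathbb{N})$ can be defined by putting $n \in \rho(r)$ precisely when the $n$-th base-$3$ digit of $r$ equals $2$, a decision that, by $\mathsf{LEM}$, reduces to comparing $r$ against an explicit finite list of rationals.

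The main technical burden lies in the $\mathbb{R}$-case, where defining the digit-extraction map on Dedekind reals predicatively and verifying that $\rho \circ \iota = \mathrm{id}_{\mathcal{P}(\mathbb{N})}$ is routine but bookkeeping-heavy, whereas the $\mathsf{FunRel}$-case is essentially a one-line verification. Once both retractions are in place, the conclusion is immediate: if either $\mathbb{R}$ or $\mathsf{FunRel}(\mathbb{N},\mathbb{N})$ were isomorphic to a set, then so would be its retract $\mathcal{P}(\mathbb{N})$, contradicting the proof of Proposition \ref{pn1}.
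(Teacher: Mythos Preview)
Your argument is correct, but it takes a heavier route than the paper's. You reduce to the fact (extracted from the proof of Proposition~\ref{pn1}) that $\mathcal{P}(\mathbb{N})$ is not isomorphic to a set, and then exhibit $\mathcal{P}(\mathbb{N})$ as a retract of $\mathbb{R}$ and of $\mathsf{FunRel}(\mathbb{N},\mathbb{N})$. The paper instead reduces directly to the \emph{statement} of Proposition~\ref{pn1}, namely that $\mathcal{P}(1)$ is not isomorphic to a set: from $\mathbb{R}$ it carves out by comprehension the two-element subcollection $\{0,1\}_\mathbb{R}$ and shows, classically, that $\mathcal{P}(1)\cong\{0,1\}_\mathbb{R}$; the $\mathsf{FunRel}$ case is handled analogously by comprehending to $\{0,1\}$-valued relations. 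This is considerably lighter, especially for $\mathbb{R}$: you need a full Cantor-style embedding with digit extraction, whereas the paper only needs two points and a one-line isomorphism. Your retract lemma is a nice general observation, but note that in \emttc\ one cannot eliminate $\varphi\vee\neg\varphi$ into a set (this is exactly the remark preceding Proposition~\ref{pn1}), so the partial sums $q_N(U)$ cannot be produced as rational \emph{terms}; the section $\iota$ must be defined purely propositionally at the level of the cut, which is doable but is precisely the ``bookkeeping-heavy'' part you flag. The paper's two-point trick sidesteps this entirely.
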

\begin{proof}
If $\mathbb{R}$ were isomorphic to a set, then the set $\{0,1\}_\mathbb{R}$ obtained from $\mathbb{R}$ by comprehension through the small proposition
\[
(\forall q \in \mathbb
{Q})(q \,\varepsilon\, A \Leftrightarrow q < 0) \vee (\forall q \in \mathbb
{Q})(q \,\varepsilon\, A \Leftrightarrow q < 1) \text{ with $A \in \mathcal{P}(\mathbb{Q})$}
\]
would be isomorphic to a set too. In turn, it is easy to show that, classically, $\mathcal{P}(1)$ is isomorphic to $\{0,1\}_\mathbb{R}$; the isomorphism is obtained by specialising to $\{0,1\}_\mathbb{R}$ the following operations between $\mathcal{P}(1)$ and $\mathcal{P}(\mathbb{Q})$.
\begin{align*}
[(\forall q \in \mathbb
{Q})(q \,\varepsilon\, A \Leftrightarrow q < 1)] & \in \mathcal{P}(1) \; & [A \in \mathcal{P}(\mathbb{Q})] \\
\{q \in \mathbb{Q} \,|\, (q < 0 \wedge \neg \ext(x)) \vee (q < 1 \wedge \ext(x)) \} & \in \mathcal{P}(\mathbb{Q}) \; & [x \in \mathcal{P}(1)]
\end{align*}
We conclude by Proposition \ref{pn1}.

For $\mathsf{FunRel}(\mathbb{N},\mathbb{N})$ the proof is analogous, using the set obtained by comprehension from it through the small proposition
$R(\langle x,y \rangle) \Rightarrow y =_\mathbb{N} 0 \vee y =_\mathbb{N} 1$, with $R \in \mathcal{P}(\mathbb{N} \times \mathbb{N})$.
\end{proof}

\section{Equiconsistency of the Calculus of Constructions with its classical version}\label{impmf}

Recall that  the intensional level \mtt\ of \mf\ can be seen as a predicative version of the Calculus of Constructions in \cite{CoC}. More precisely, consider the impredicative theory $\mathbf{mTT}_\mathsf{imp}$ obtained by extending the intensional level \mtt\ with the congruence rules for types and terms and with the following resizing rules collapsing the predicative distinction between effective and open-ended types.
\[
\textsf{col-into-set}\;\frac{A \; col}{A \; set} \qquad \textsf{prop-into-prop}_\mathsf{s}\;\frac{\varphi \; prop}{\varphi \; prop_s}
\]
Analogously, consider the impredicative theory $\mathbf{emTT}_\mathsf{imp}$ obtained by extending \emtt\ with the same resizing rules above.

The theories $\mathbf{mTT}_\mathsf{imp}$ and $\mathbf{emTT}_\mathsf{imp}$ can be interpreted as an extended version of the Calculus of Constructions with inductive types from \mltt, and an extensional version of it with the quotient constructor, respectively.

In particular, thanks to the resizing rules, the universe of small proposition $\mathsf{Prop_s}$ of \mtt\ becomes an impredicative universe of (all) propositions. For example, we can derive impredicative quantification as shown in the following derivation tree (where, for readability, we use $\mathsf{Prop_s}$ presented à la Russel).
\begin{prooftree}
\AxiomC{$\varphi(x) \in \mathsf{Prop_s} \; [x \in \mathsf{Prop_s}]$}
\LeftLabel{$\textsf{E-}\mathsf{Prop_s}$}
\UnaryInfC{$\varphi(x) \; prop_s \; [x \in \mathsf{Prop_s}]$}
\AxiomC{$\mathsf{Prop_s} \; col$}
\LeftLabel{$\textsf{F-}\forall$}
\BinaryInfC{$(\forall x \in \mathsf{Prop_s})\varphi(x) \; prop$}
\LeftLabel{$\textsf{prop-into-prop}\mathsf{_s}$}
\UnaryInfC{$(\forall x \in \mathsf{Prop_s})\varphi(x) \; prop_s$}
\LeftLabel{$\textsf{I-}\mathsf{Prop_s}$}
\UnaryInfC{$(\forall x \in \mathsf{Prop_s})\varphi(x) \in \mathsf{Prop_s}$}
\end{prooftree}

Formally, we denote with \cocp\ the Calculus of Constructions without universes of types (apart from the impredicative universe of propositions) defined in \cite{CoC}, extended with rules for the inductive type constructors $\mathsf{N_0}$, $\mathsf{N_1}$, $+$, $\mathsf{List}$, and $\Sigma$ from the first-order fragment of \mltt\ (notice that the resulting theory is a rather small fragment of the Calculus of Inductive Constructions \cite{CIC}).

\begin{proposition}\label{a1}
$\mathbf{mTT}_\mathsf{imp}$ coincides with \cocp.
\end{proposition}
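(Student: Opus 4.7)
The plan is to establish coincidence by a mutual interpretation that is essentially a renaming of syntactic categories. After the resizing rules, the four-kind hierarchy of $\mathbf{mTT}_\mathsf{imp}$ collapses to two effective sorts: propositions (where $prop_s$ coincides with $prop$) and collections (where $set$ coincides with $col$), matching exactly the $\mathrm{Prop}/\mathrm{Type}$ distinction of \cocp.

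For the direction from \cocp\ into $\mathbf{mTT}_\mathsf{imp}$, I would map $\mathrm{Prop}$ to $prop_s$ and $\mathrm{Type}$ to $col$, and translate each inductive constructor to its homonym. The only rule requiring genuine work is the impredicative $\Pi$-formation of $\mathrm{Prop}$ in \cocp, stating that $\Pi x{:}A.\,B$ lands in $\mathrm{Prop}$ whenever $B$ does, for any $A$. This is derivable in $\mathbf{mTT}_\mathsf{imp}$ by first forming the universally quantified proposition with $\textsf{F-}\forall$ (allowing the domain to be any collection via $\textsf{col-into-set}$) and then applying $\textsf{prop-into-prop}_\mathsf{s}$; the derivation tree displayed just before the proposition already exhibits the paradigmatic case of quantification over $\mathsf{Prop_s}$ itself.

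For the reverse direction, I would map $prop_s$ and $prop$ uniformly to $\mathrm{Prop}$, and $set$ and $col$ uniformly to $\mathrm{Type}$; the two resizing rules then collapse to identities under this translation. The Tarski-style presentation of $\mathsf{Prop_s}$ in \mtt, with its encoding $\widehat{(-)}$, decoding $\mathsf{T}(-)$, and the conversion rules $\textsf{C-}\mathsf{Prop_s}$ and $\textsf{$\eta$-}\mathsf{Prop_s}$, translates to the Russell-style impredicative $\mathrm{Prop}$ of \cocp\ by interpreting both $\widehat{(-)}$ and $\mathsf{T}(-)$ as identities, which is sound because in \cocp\ every proposition is already a term of $\mathrm{Prop}$.

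The main obstacle is the careful verification that the Tarski/Russell mismatch for the propositional universe is indeed harmless under the translations, and that all congruence rules transfer correctly --- the latter being precisely why congruence rules were added explicitly in the definition of $\mathbf{mTT}_\mathsf{imp}$. Once this bookkeeping is carried out, both directions of the interpretation follow by routine induction on derivations, and the coincidence of the two theories drops out immediately.
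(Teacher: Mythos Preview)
Your high-level strategy matches the paper's: after resizing, the four kinds collapse to two, $\mathsf{Prop_s}$ becomes the impredicative $\mathrm{Prop}$, and the inductive type constructors translate by identity. However, you have omitted what the paper singles out as ``the only calculations to be made'': the treatment of the primitive propositional connectives of \mtt.

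In \cocp\ the only primitive proposition-former is the impredicative $\Pi$ (i.e.\ $\forall$); the remaining connectives $\bot$, $\wedge$, $\vee$, $\Rightarrow$, $\exists$, and the identity $\mathsf{Id}$ are not primitive but are given by their standard System-F/impredicative encodings. In \mtt, by contrast, all of these are primitive with their own introduction, elimination, and computation rules. Your reverse translation says nothing about where these primitives go; mapping them to their impredicative encodings is the obvious choice, but then one must verify that every \mtt-rule for each connective is derivable for the corresponding encoding in \cocp, and conversely that the encoded connectives in \cocp\ satisfy the same judgemental equalities as the \mtt\ primitives. The paper flags precisely this as the substantive content, and highlights in particular that it succeeds for propositional identity because in \mtt\ identity is already defined \`a la Leibniz, hence literally is its own impredicative encoding. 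Without this step your argument does not close: you have two theories with different primitive logical vocabularies and no explanation of why they agree.

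A secondary point: you treat $\forall$ and $\Pi$ as separate constructors to be mapped across, but the paper observes that in \cocp\ there is a single $\Pi$ and that the \mtt-constructors $\forall$ and $\Pi$ become ``two names'' for it once the sort distinctions collapse. This is worth stating explicitly, since it is part of why the impredicative encodings live in $\mathrm{Prop}$ as required.
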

\begin{proof}
Since in $\mathbf{mTT}_\mathsf{imp}$ the distinction between sets and collections, as well as propositions and small propositions, disappears we have that the universe of small proposition $\mathsf{Prop_s}$ becomes the impredicative universe of (all) propositions; set constructors are available for all types, as in \cocp; and the universal quantifier $\forall$ and the dependent function space $\Pi$ are just two names for the only $\Pi$ constructor of \cocp. Then, the only calculations to be made are those to check that the propositional constructors coincide with their impredicative encoding made from the universal quantifier alone; in particular, it works for identity since in \mtt\ is defined à la Leibniz.
\end{proof}

Remarkably, the addition of impredicativity to \mf\ does not affect most of the techniques used to investigate its meta-mathematical properties. In particular, the quotient model, the equiconsistency of the two levels, and the equiconsistency with the classical version all scale easily to the impredicative case.

\begin{proposition}\label{a2}
The theory \emttimp\ is interpretable in the quotient model constructed over \mttimp.
\end{proposition}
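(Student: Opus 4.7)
The plan is to reuse, essentially verbatim, the setoid quotient model constructed over \mtt\ in \cite{m09} that interprets \emtt, now built over \mttimp\ instead of \mtt. First, I would observe that the construction is parametric in the target theory: every interpretation clause for the common fragment (dependent sums and products, lists, coproducts, quotients, the power collection, propositional constructors, and the $\mathsf{Prop_s}$-universe) relies only on the typing and judgemental equality rules that \mttimp\ inherits from \mtt, so those clauses transfer unchanged. Consequently the whole \emtt-fragment of the validity theorem of \cite{m09} remains in force when the target is taken to be \mttimp.

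The main new content is then to check that the two resizing rules of \emttimp\ are validated. For \textsf{col-into-set}, an \emttimp-collection is interpreted as an extensional collection $(A,\sim_A)$ with $A$ an \mttimp-collection and $\sim_A$ an \mttimp-proposition; to validate the rule the same data must qualify as an extensional set, namely $A$ should be an \mttimp-set and $\sim_A$ a small equivalence relation. Both upgrades are provided at no cost by the corresponding resizing rules of the target theory \mttimp\ applied to $A$ and to $\sim_A$ respectively. The case of \textsf{prop-into-prop}$_\mathsf{s}$ of \emttimp\ is entirely analogous, using \textsf{prop-into-prop}$_\mathsf{s}$ of \mttimp; the added congruence rules for types and terms come for free, since the interpretation already respects extensional judgemental equality.

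The one potentially subtle point, which I expect to be a coherence check rather than a genuine obstacle, is to ensure that whenever a derivation of \emttimp\ uses a collection in a slot expecting a set—or a proposition in a slot expecting a small one—through the resizing rules, the interpretation agrees with itself, i.e.\ commutes with the syntactic collapse. Because the resizing rules of \mttimp\ in the target are identities on the underlying pre-syntax and leave the carrier type untouched, this coherence holds automatically, and the usual induction on derivations of \cite{m09} establishing the validity theorem carries through with no additional work.
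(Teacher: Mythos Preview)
Your proposal is correct and follows essentially the same approach as the paper: reuse the quotient-model interpretation of \cite{m09} verbatim over \mttimp, and validate the new resizing rules of \emttimp\ by observing that the interpreted extensional collection $(A^\mathcal{I},=_{A^\mathcal{I}})$ becomes an extensional set precisely because the resizing rules of \mttimp\ turn $A^\mathcal{I}$ into a set and $=_{A^\mathcal{I}}$ into a small proposition. Your additional remarks on parametricity of the construction, the analogous treatment of \textsf{prop-into-prop}$_\mathsf{s}$, and the coherence check are more explicit than the paper's terse proof but do not diverge from it in substance.
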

\begin{proof}
By using the same interpretation defined in \cite{m09}. The additional resizing rules of \emttimp\ are easily validated. For example, consider the rule $\textsf{col-into-set}$; to check its validity we need to know that, for each \emttimp-collection $A$, the dependent extensional collection $A_=^\mathcal{I}$ interpreting it is a dependent extensional set, which, by definition, amounts to know that its support $A^\mathcal{I}$ is a set and its equivalence relation $=_{A^\mathcal{I}}$ is a small proposition; but this is guaranteed precisely by the resizing rules of \mttimp.
\end{proof}

\begin{corollary}\label{a8}
The theory $\mathbf{emTT}_\mathsf{imp}$ is interpretable in the quotient model constructed over \cocp.
\end{corollary}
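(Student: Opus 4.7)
The plan is to observe that this corollary is an immediate consequence of the two preceding results, combined in the most direct way possible. There is essentially no new content to prove: all the substantive work has already been done in Proposition \ref{a1} (identifying $\mathbf{mTT}_\mathsf{imp}$ with \cocp) and Proposition \ref{a2} (showing that \emttimp\ is interpretable in the quotient model over \mttimp).

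Concretely, I would first invoke Proposition \ref{a2} to obtain an interpretation of \emttimp\ into the quotient model built over \mttimp, where the resizing rules of \mttimp\ are the crucial ingredient ensuring that the dependent extensional collections used to interpret \emttimp-collections really are extensional sets (their supports being sets and their equivalence relations being small propositions). Then I would apply Proposition \ref{a1} to rewrite \mttimp\ as \cocp, so that the quotient model constructed over \mttimp\ and the quotient model constructed over \cocp\ are literally the same construction. Composing these two observations yields the desired interpretation of \emttimp\ into the quotient model over \cocp.

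There is no real obstacle here: the only thing worth double-checking is that the quotient model construction of \cite{m09} is formulated in terms that survive the identification of Proposition \ref{a1}, i.e. that it depends only on the type-theoretic machinery (dependent sums, $\Pi$, inductive types from the first-order fragment of \mltt, and the impredicative universe of propositions) that \mttimp\ and \cocp\ share syntactically. Since Proposition \ref{a1} establishes that this identification is on the nose, the construction transfers verbatim, and the proof reduces to a one-line composition of Propositions \ref{a1} and \ref{a2}.
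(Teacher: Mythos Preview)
Your proposal is correct and matches the paper's own proof, which simply reads ``Combining Propositions \ref{a1} and \ref{a2}.'' You have identified exactly the intended argument: Proposition \ref{a2} gives the interpretation into the quotient model over \mttimp, and Proposition \ref{a1} identifies \mttimp\ with \cocp, so the result follows by composition.
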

\begin{proof}
Combining Propositions \ref{a1} and \ref{a2}.
\end{proof}

\begin{proposition}\label{a5}
$\mathbf{emTT}_\mathsf{imp} + \mathsf{propext}$ is conservative over $\mathbf{emTT}_\mathsf{imp}$, and $\mathbf{emTT}^c_\mathsf{imp} + \mathsf{propext}$ is conservative over $\mathbf{emTT}^c_\mathsf{imp}$.
\end{proposition}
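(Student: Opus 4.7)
The plan is to replay the argument of Subsection 3.3 that culminates in Corollary \ref{consprop}, replacing \emtt\ with its impredicative version throughout. First I would observe that the definition of canonical isomorphism, as well as its extension to contexts and telescopic substitutions, transfers verbatim to $\mathbf{emTT}_\mathsf{imp}$: the clauses are indexed by type constructors rather than by kinds, so no modification is needed. The same holds for Proposition \ref{can} and Lemmas \ref{cansub}, \ref{cansubpro}, and \ref{correct}, whose proofs proceed by induction on constructors and never invoke the predicative separation between sets and collections or between propositions and small propositions.

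Next I would set up the partial interpretation of $\mathbf{emTT}_\mathsf{imp}+\mathsf{propext}$ into $\mathbf{emTT}_\mathsf{imp}$ exactly as in Definition \ref{intpre}, and prove the corresponding Validity Theorem by induction on derivations. The novel inductive cases are the two resizing rules $\textsf{col-into-set}$ and $\textsf{prop-into-prop}_\mathsf{s}$, which are absorbed trivially: if by inductive hypothesis $\sem{A \; col \; [\Gamma]}$ admits a representative $A' \; col \; [\Gamma']$, then applying the same resizing rule in the target produces a witness for $\sem{A \; set \; [\Gamma]}$, and analogously for small propositions. All the pre-existing clauses, including the crucial case of $\mathsf{propext}$ whose verification depends only on the definition of canonical isomorphism between propositions, go through unchanged. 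The impredicative analogues of Proposition \ref{cons} and Corollary \ref{consprop} then follow by the same arguments, yielding the first half of the statement.

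For the classical half, I would repeat the entire construction with the additional rule $\mathsf{LEM}$ present in both the source and target theories. The only extra verification required is that $\mathsf{LEM}$ is preserved: if $\sem{\varphi \; prop \; [\Gamma]} \equiv \eqclass{\varphi' \; prop \; [\Gamma']}$, then the proof-term witnessing $\varphi \vee \neg\varphi$ is interpreted as the canonical $\mathsf{true}$ term supplied by the corresponding instance of $\mathsf{LEM}$ applied to $\varphi'$ in $\mathbf{emTT}^c_\mathsf{imp}$. The main obstacle, in so far as there is one, is purely bureaucratic: one must check that each clause of Definition \ref{intpre} and each inductive step of Theorem \ref{validitypropext} remains kind-agnostic enough to survive the identification of $set$ with $col$ and $prop_s$ with $prop$ forced by the resizing rules. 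Since every clause is constructor-based and none exploits the predicative distinction, this inspection is immediate.
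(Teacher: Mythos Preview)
Your proposal is correct and follows essentially the same approach as the paper: reuse the canonical-isomorphism interpretation of Definition~\ref{intpre} verbatim, validate the resizing rules in point~2 of the Validity Theorem by appealing to the same rules in the target theory, validate $\mathsf{LEM}$ in point~3 by noting that the interpretation preserves the outer connectives (so $\sem{\varphi \vee \neg\varphi}$ is represented by $\varphi' \vee \neg\varphi'$), and then conclude via the analogues of Proposition~\ref{cons} and Corollary~\ref{consprop}. The paper's proof is terser but makes exactly these points.
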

\begin{proof}
Since canonical isomorphisms has been defined inductively in the meta-theory, and not internally as in \cite{MFhott}, we can use the same interpretation described in Definition \ref{intpre}. In the second point of the Validity Theorem \ref{validitypropext}, the additional resizing rules of the source theories are validated thanks to the same rules in the corresponding target theory; in the third point of the same theorem, the additional axiom $\mathsf{LEM}$ is validated analogously, thanks to the fact that the interpretation fixes the connectives: $\sem{\decorate{\varphi} \vee \neg \decorate{\varphi}} \equiv \eqclass{\varphi \vee \neg \varphi}$ whenever $\sem{\decorate{\varphi}} \equiv \eqclass{\varphi}$. By the same observations, Proposition \ref{cons} still holds in the presence of resizing rules and of $\mathsf{LEM}$. Then, we can conclude as in Corollary \ref{consprop}.
\end{proof}

\begin{proposition}\label{a6}
The theories $\mathbf{emTT}^c_\mathsf{imp}$ and $\mathbf{emTT}_\mathsf{imp}$ are equiconsistent.
\end{proposition}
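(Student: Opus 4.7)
The plan is to adapt the Gödel-Gentzen translation of Definition \ref{thedntranslation} to the impredicative setting, and to conclude as in Corollary \ref{equiconsistency}. The nontrivial direction is that the consistency of $\mathbf{emTT}_\mathsf{imp}$ implies that of $\mathbf{emTT}^c_\mathsf{imp}$, since the converse is immediate from the fact that $\mathbf{emTT}_\mathsf{imp}$ is a sub-theory of $\mathbf{emTT}^c_\mathsf{imp}$.

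First I would define the translation $(-)^\mathcal{N}$ on the pre-syntax of $\mathbf{emTT}^c_\mathsf{imp}$ by literally the same clauses as in Definition \ref{thedntranslation}, since the impredicative theory introduces no new pre-syntactic constructors but only the resizing rules $\textsf{col-into-set}$ and $\textsf{prop-into-prop}_\mathsf{s}$. The core step is then to re-prove the Validity Theorem \ref{dnvalidity} in the impredicative setting by induction on derivations. All cases involving the original constructors---including points 6 and 7 about the stability of translated propositions and propositional equalities---go through verbatim, because the auxiliary Propositions \ref{char} and \ref{two} use only rules common to the predicative and impredicative theories and their proofs are uniform in the kind structure. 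The two new cases are the resizing rules themselves, and they are validated immediately by the corresponding rules of the target theory: if $A^\mathcal{N} \; col$ is derivable in \emttimp\ by inductive hypothesis, then $\textsf{col-into-set}$ in \emttimp\ yields $A^\mathcal{N} \; set$, and analogously for the propositional resizing rule; the axiom $\mathsf{LEM}$ is handled exactly as in the predicative case, since the translation fixes negation and disjunction at the pre-syntactic level.

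Equiconsistency then follows as in Corollary \ref{equiconsistency}: the inconsistency judgement $\mathsf{true} \in \bot \; [\,]$ is fixed by the translation, so any derivation of $\bot$ in $\mathbf{emTT}^c_\mathsf{imp}$ yields one in $\mathbf{emTT}_\mathsf{imp}$. The main point requiring care---as foreshadowed by the remark closing Section \ref{eqclass}---is checking that the translation of $\mathcal{P}(1)$ as $(\Sigma\, x \in \mathcal{P}(1))(\neg\neg\ext(x) \Rightarrow \ext(x))$ still has stable equality in the impredicative setting, but this is handled by the same argument as in the predicative case. I do not foresee any genuine obstacle: unlike the translation into an intensional target (which would demand canonical isomorphisms), the extensional, impredicative target absorbs the resizing rules painlessly, allowing the argument of Section \ref{eqclass} to transfer almost verbatim.
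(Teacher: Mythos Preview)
Your proposal is correct and follows essentially the same approach as the paper: reuse the double-negation translation of Definition \ref{thedntranslation} unchanged, and observe that the only new cases in the Validity Theorem are the resizing rules, which are validated by the corresponding rules in the target theory $\mathbf{emTT}_\mathsf{imp}$. The paper's own proof is a one-sentence pointer to exactly this argument; your version simply spells out more of the routine verifications (stability of the translated $\mathcal{P}(1)$, handling of $\mathsf{LEM}$, the inconsistency judgement being fixed), none of which diverge from what the paper intends.
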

\begin{proof}
By using the double-negation interpretation already defined in \ref{thedntranslation} for the predicative case; the additional resizing rules are trivially validated in the second point of the Validity Theorem \ref{dnvalidity}.
\end{proof}

We then consider the \textit{classical version of} \cocp\ obtained by adding to its calculus a constant $\mathsf{lem}$ formalising the Law of the Excluded Middle.
\[
\mathsf{lem} \in (\forall x \in \mathsf{Prop})(x \vee \neg\neg x)
\]
We call \cocpc\ the resulting theory. Notice that, contrary to \mf, where we focused on the extensional level to define its classical version, here we chose to add classical logic directly in the intensional level. We think this choice is more in line with the existing literature on classical extensions of the Calculus of (Inductive) Constructions.

\begin{proposition}\label{a7}
\cocpc\ is interpretable in $\mathbf{emTT}_\mathsf{imp}^c+\mathsf{propext}$.
\end{proposition}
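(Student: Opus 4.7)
The plan is to lift the syntactic translation of Proposition~\ref{eqc1} to the impredicative setting and then to handle the single new constant $\mathsf{lem}$ by appealing to the $\mathsf{LEM}$ rule of the target. By Proposition~\ref{a1}, the source \cocp\ coincides with $\mathbf{mTT}_\mathsf{imp}$, so \cocpc\ is nothing but $\mathbf{mTT}_\mathsf{imp}$ augmented with the typing axiom for $\mathsf{lem}$. Accordingly, I would reuse the very translation of Proposition~\ref{eqc1} — renaming $\mathsf{Id}$ to $\mathsf{Eq}$, $\mathsf{Prop_s}$ to $\mathcal{P}(1)$, $\widehat{-}$ to $[-]$, $\mathsf{T}(-)$ to $\ext(-)$, and collapsing every proof-term constructor (now including the new $\mathsf{lem}$) to the canonical term $\mathsf{true}$ of the target.

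First, I would re-verify that this translation interprets every rule of $\mathbf{mTT}_\mathsf{imp}$ into $\mathbf{emTT}_\mathsf{imp}^c + \mathsf{propext}$, proceeding exactly as in the proof of Proposition~\ref{eqc1}. The propositional universe computation rule $\textsf{C-}\mathsf{Prop_s}$ is again validated by $\mathsf{propext}$, and the collapse of all proof-terms to $\mathsf{true}$ is justified by $\textsf{prop-mono}$ at the extensional level. Beyond the rules of \mtt, the only additional rules of $\mathbf{mTT}_\mathsf{imp}$ are the resizing rules $\textsf{col-into-set}$ and $\textsf{prop-into-prop}_\mathsf{s}$ (together with routine congruence rules), and these are validated immediately because $\mathbf{emTT}_\mathsf{imp}^c$ comes equipped with those very same two resizing rules by definition.

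Second, I would validate the translated $\mathsf{lem}$ axiom. Its type becomes a classical excluded-middle statement universally quantified over $\mathcal{P}(1)$ and applied to the decoding $\ext$; for any $x \in \mathcal{P}(1)$, since $\ext(x)$ is a small proposition, the rule $\mathsf{LEM}$ of \emttc\ directly provides the desired disjunction, and universal generalisation over $\mathcal{P}(1)$ yields the translated type. Proof-irrelevance in the target then equips this derivation with the canonical witness $\mathsf{true}$, as required by the translation, and the $\beta$-computation rules associated with $\mathsf{lem}$ are automatically satisfied because $\textsf{prop-mono}$ forces all proof-terms of a proposition to be judgementally equal.

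I do not expect a genuine obstacle here: the impredicative resizing rules of the source mirror those of the target one-for-one, and the only new axiomatic content, $\mathsf{lem}$, is exactly what $\mathsf{LEM}$ of \emttc\ provides. The only point worth recording is that this interpretation lands in $\mathbf{emTT}_\mathsf{imp}^c + \mathsf{propext}$ rather than directly in $\mathbf{emTT}_\mathsf{imp}^c$; the extra layer $\mathsf{propext}$ can subsequently be discarded by invoking the conservativity result of Proposition~\ref{a5}, thus completing the impredicative analogue of the equiconsistency chain for \mf.
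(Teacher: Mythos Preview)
Your proposal is correct and follows essentially the same approach as the paper: identify \cocpc\ with $\mathbf{mTT}_\mathsf{imp}$ plus the constant $\mathsf{lem}$ via Proposition~\ref{a1}, extend the translation of Proposition~\ref{eqc1} by sending $\mathsf{lem}\mapsto\mathsf{true}$, and observe that the extra rules (congruence, resizing, and the typing of $\mathsf{lem}$) are validated because their translations correspond to rules already present in $\mathbf{emTT}_\mathsf{imp}^c+\mathsf{propext}$. Your remark about $\beta$-computation rules for $\mathsf{lem}$ is superfluous since $\mathsf{lem}$ is a bare constant with no associated computation rule, but this does not affect the argument.
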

\begin{proof}
Thanks to Proposition \ref{a1}, we can refer to the theory $\mathbf{mTT}_\mathsf{imp}$ extended with the constant $\mathsf{lem}$ above. Then, we extend the interpretation of Proposition \ref{eqc1} by sending such new constant to the canonical proof-term of the extensional level $\mathsf{lem} \mapsto \mathsf{true}$. The additional rules assumed on top of those of \mtt, namely the congruence rules, the resizing rules, and the typing axiom of $\mathsf{lem}$ are validated by the interpretation simply because all their translations are equivalent to rules already present in $\mathbf{emTT}^c_\mathsf{imp}$.
\end{proof}

\begin{corollary}\label{coccon}
The theories \cocp\ and \cocpc\ are equiconsistent.
\end{corollary}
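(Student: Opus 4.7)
The plan is to chain together the impredicative analogues of the equiconsistency results already established for \mf. One direction is immediate: since \cocpc\ is obtained from \cocp\ just by adding the constant $\mathsf{lem}$, any derivation of $\bot$ in \cocp\ is also a derivation in \cocpc, so consistency of \cocpc\ trivially entails consistency of \cocp.

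For the nontrivial direction, I would argue by contraposition, assuming \cocpc\ is inconsistent and tracing the inconsistency back through the chain of interpretations to \cocp. First, by Proposition \ref{a7}, an inconsistency in \cocpc\ yields an inconsistency in $\mathbf{emTT}_\mathsf{imp}^c + \mathsf{propext}$. Next, by Proposition \ref{a5}, since $\mathbf{emTT}_\mathsf{imp}^c + \mathsf{propext}$ is conservative over $\mathbf{emTT}_\mathsf{imp}^c$ (in particular for the proposition $\bot$), we get an inconsistency in $\mathbf{emTT}_\mathsf{imp}^c$. Then Proposition \ref{a6}, which extends the double-negation translation to the impredicative setting, transports this inconsistency to $\mathbf{emTT}_\mathsf{imp}$. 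Finally, by Corollary \ref{a8}, $\mathbf{emTT}_\mathsf{imp}$ is interpretable in the quotient model built over \cocp, so the inconsistency descends to \cocp.

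Since each of the four ingredients Propositions \ref{a7}, \ref{a5}, \ref{a6}, and Corollary \ref{a8} has already been verified earlier in the section, the proof itself is essentially a bookkeeping exercise of composing these interpretations. The only conceptual point worth noting explicitly is that all four ingredients preserve the judgement $\mathsf{true} \in \bot \; [\,]$, mapping it to the corresponding inconsistency judgement in the target theory, so the chain indeed propagates inconsistencies in a single uniform way. The main obstacle was not in this final corollary but rather in lifting the predicative machinery (canonical isomorphisms and the double-negation translation) to the impredicative setting with resizing rules, which was already handled in Propositions \ref{a5} and \ref{a6}; here we simply reap the benefits.
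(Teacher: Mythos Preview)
Your proposal is correct and follows essentially the same approach as the paper: both chain together Proposition~\ref{a7}, Proposition~\ref{a5}, Proposition~\ref{a6}, and Corollary~\ref{a8} in that order to pass from \cocpc\ through $\mathbf{emTT}_\mathsf{imp}^c+\mathsf{propext}$, $\mathbf{emTT}_\mathsf{imp}^c$, and $\mathbf{emTT}_\mathsf{imp}$ back to \cocp. Your additional remarks on the trivial direction and on preservation of the inconsistency judgement are sound and make explicit what the paper leaves implicit in its diagram.
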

\begin{proof}
Following the chain of interpretations depicted below, successively applying Proposition \ref{a7}, Proposition \ref{a5}, Proposition \ref{a6}, and Corollary \ref{a8}.
\[
\begin{tikzcd}
\text{\cocpc} \arrow[d] \arrow[rr, dashed] &             & \text{\cocp}           \\
\mathbf{emTT}_\mathsf{imp}^c+\mathsf{propext} \arrow[r]                    & \mathbf{emTT}_\mathsf{imp}^c \arrow[r] & \mathbf{emTT}_\mathsf{imp} \arrow[u]
\end{tikzcd}
\]
\end{proof}

\section{Conclusions}
We have shown the equiconsistency of the Minimalist Foundation in \cite{m09}, for short \mf, with its classical version.  This is a peculiar property not shared by most foundations for constructive and predicative mathematics, such Martin-L{\"o}f's type theory, Homotopy Type Theory o  Aczel's {\bf CZF}.

In more detail, we have first proved that the levels \mtt\ and \emtt\ of \mf\ are mutually equiconsistent and then that \emtt\ is equiconsistent with its classical version \emttc. 
As a consequence, we have deduced that Dedekind real numbers do not form a set neither in \emttc\  nor in  both levels of \mf. Therefore, \emttc\ can be adopted as a foundation for classical predicative mathematics  à la Weyl, and hence
\mf\ becomes compatible with classical predicativism contrary to most relevant foundations for constructive mathematics.

Finally, we have extended these equiconsistency results  to an impredicative version of \mf\ whose intensional level, called \cocp, coincides with Coquand-Huet's Calculus of Construction in \cite{CoC} extended with  basic inductive type constructor of Martin-Löf's type theory in \cite{MLTT}.
Our contribution extends the equiconsistency result for \coc\ in \cite{CoCprooftheory} with a proof that does not rely  on normalization properties of \cocp.

In the future we intend to exploit a major benefit of  our chain of equiconsistent results, namely that to establish the proof-theoretic strength of \mf, which is still an open problem,
we are no longer bound to refer to \mtt\  but we can interchangeably use \emtt\ or \emttc. A further related goal would be to extend the equiconsistency results presented here  to extensions  of \mf, and of its impredicative version, with
inductive and coinductive definitions investigated in \cite{mmr21,mmr22,MFwtypes}, given
that it is not clear how to extend the Gödel-Gentzen double-negation translation to these extensions.

\paragraph{Acknowledgments}
Fruitful discussions on this work have taken place during the authors' stay at Hausdorff Research Institute for Mathematics for the trimester program ``Prospects of Formal Mathematics''.

\bibliography{main}
\bibliographystyle{plain}

\end{document}